\documentclass[11pt]{amsart}

\usepackage[T1]{fontenc}
\usepackage{lmodern}
\usepackage{microtype}
\usepackage{amsmath,amssymb,mathtools}
\usepackage{enumitem}
\usepackage{aliascnt}
\usepackage[hidelinks]{hyperref}

\setlist[enumerate]{leftmargin=*,label=(\roman*)}

\usepackage[a4paper, margin=3cm]{geometry}
\newcommand{\R}{\mathbb{R}}
\newcommand{\Z}{\mathbb{Z}}
\newcommand{\MRR}{M_\R}
\newcommand{\conv}{\operatorname{conv}}

\newcommand{\lin}{\operatorname{lin}}
\newcommand{\relint}{\operatorname{relint}}
\newcommand{\codeg}{\operatorname{codeg}}
\newcommand{\mcodeg}{\operatorname{mcodeg}}
\newcommand{\mdeg}{\operatorname{mdeg}}
\newcommand{\MV}{\operatorname{MV}}
\newcommand{\vertices}{\operatorname{Vert}}

\newcommand{\cayley}{\mathbin{*}}
\newcommand{\dual}[1]{{#1}^{\times}}
\newcommand{\MRbar}{{\overline M}_{\R}}
\newtheorem{theorem}{Theorem}[section]
\newcommand{\Est}{\operatorname{E_{st}}}

\newcommand{\pro}[2]{\langle #1, #2 \rangle}

\newaliascnt{proposition}{theorem}
\newtheorem{proposition}[proposition]{Proposition}
\aliascntresetthe{proposition}

\newaliascnt{lemma}{theorem}
\newtheorem{lemma}[lemma]{Lemma}
\aliascntresetthe{lemma}

\newaliascnt{corollary}{theorem}
\newtheorem{corollary}[corollary]{Corollary}
\aliascntresetthe{corollary}

\theoremstyle{definition}
\newaliascnt{definition}{theorem}
\newtheorem{definition}[definition]{Definition}
\aliascntresetthe{definition}

\theoremstyle{remark}
\newaliascnt{remark}{theorem}
\newtheorem{remark}[remark]{Remark}
\aliascntresetthe{remark}

\theoremstyle{remark}
\newaliascnt{example}{theorem}
\newtheorem{example}[example]{Example}
\aliascntresetthe{example}

\usepackage[nameinlink,capitalise,noabbrev]{cleveref}

\crefname{theorem}{Theorem}{Theorems}
\Crefname{theorem}{Theorem}{Theorems}
\crefname{proposition}{Proposition}{Propositions}
\Crefname{proposition}{Proposition}{Propositions}
\crefname{lemma}{Lemma}{Lemmas}
\Crefname{lemma}{Lemma}{Lemmas}
\crefname{corollary}{Corollary}{Corollaries}
\Crefname{corollary}{Corollary}{Corollaries}
\crefname{definition}{Definition}{Definitions}
\Crefname{definition}{Definition}{Definitions}
\crefname{remark}{Remark}{Remarks}
\Crefname{remark}{Remark}{Remarks}

\title[Decomposing Gorenstein polytopes of large index]
{Decomposing Gorenstein polytopes of large index}
\author{Johannes Knupfer}
\author{Benjamin Nill}
\address{Faculty of Mathematics, Otto-von-Guericke University Magdeburg, Universit\"atsplatz 2, 39106 Magdeburg, Germany}
\email{\{johannes.knupfer, benjamin.nill\}@ovgu.de}
\date{}
\subjclass[2020]{Primary 52B20; Secondary 14M25, 52B12}
\keywords{Gorenstein polytope, reflexive polytope, free join, Cayley polytope, mixed degree, mixed volume,  Calabi-Yau dimension, stringy $E$-polynomial}

\begin{document}

\begin{abstract}
In this paper we prove a free-join decomposition theorem on Gorenstein polytopes significantly improving upon results in prior joint work of the second author with Batyrev, Borger, Haase, Kretschmer, and Payne. In particular, as a strong extension of the Batyrev-Juny lattice pyramid theorem it implies that any $d$-dimensional Gorenstein polytope $P$ of index larger than $\frac{d+2}{2}$ is a free join of Gorenstein polytopes. Here, the index (also called codegree) is the dilation factor $r$ such that $rP$ is reflexive. As our main application, we prove that the stringy $E$-polynomial of a Gorenstein polytope $P$ vanishes if and only if $P$ is thin (i.e., its local $h^*$-polynomial vanishes), and it has the expected degree otherwise. The latter was conjectured by Batyrev and the second author.

The proofs of the general results were found via ChatGPT 5.6 Sol, while a proof of the extremal case of the Batyrev-Juny conjecture is contained in the master thesis of the first author. 
\end{abstract}

\maketitle

\section{Introduction and main applications}

In this non-technical section we describe the main applications of our decomposition theorem (Theorem~\ref{main}) on Gorenstein polytopes. Proofs will be given in the next section. 

\subsection{Our setting}

Let $M\cong\Z^d$ be a lattice and denote by $\MRR \cong \R^d$ the associated real vector space. A polytope $P\subset M_{\R}$ is called {\em lattice polytope} if its vertices are in $M$. Two lattice polytopes are {\em isomorphic} or {\em unimodularly equivalent} if there is an affine-linear isomorphism from one polytope to the other that maps the lattice $M$ bijectively onto itself. In 1994 Batyrev \cite{batyrev94} introduced in the context of mirror symmetry the following class of lattice polytopes. A $d$-dimensional lattice polytope $P$ is {\em reflexive}\footnote{Classically, the unique interior lattice point is assumed to be the origin.} if it contains an interior lattice point $m \in M$ that has lattice distance one from every facet of $P$ (i.e, for each facet $F$ there is a linear functional $u$ in the dual lattice of $M$ such that $\pro{u}{f}-\pro{u}{m}=1$ for all $f \in F$). Subsequently in \cite{batyrevborisov,batyrev_mirror_1996}, together with Borisov, this definition was extended to a larger class of lattice polytopes. We say a lattice polytope $P$ is {\em Gorenstein} if there exists a positive integer $r$ such that $r P$ is reflexive. Here, the number $r$ is unique and is called the {\em index} or {\em codegree} of the Gorenstein polytope $P$. The index of a Gorenstein polytope is a fundamental invariant of a Gorenstein polytope and ranges between $1$ and $\dim(P)+1$. For each dimension $d$ there are only finitely many $d$-dimensional Gorenstein polytopes up to isomorphism. For instance for $d=1$, there is the reflexive polytope $[-1,1]$ (Gorenstein of index $1$) and the Gorenstein polytope $[0,1]$ of index $2$. Gorenstein polytopes satisfy beautiful duality properties and appear naturally in combinatorial commutative algebra, Ehrhart theory and related areas. We refer to \cite{BN08,NS13,Nil24} for more on the background and the basic results on Gorenstein polytopes.

\subsection{Free joins}
There is a straightforward way to get high-dimensional Gorenstein polytopes. For this let $P_1, P_2$ be Gorenstein polytopes with respect to the lattices $M_1$ and $M_2$. Then
\[P_1 \circ_\Z P_2 := \conv(P_1 \times \{0\} \times \{0\},\{0\} \times P_2 \times \{1\}) \subset (M_1)_\R \oplus_\R (M_2)_\R 
\oplus_\R \R\]
is called the {\em free join} of $P_1$ and $P_2$. It is again a Gorenstein polytope. Note that lattice polytopes $P$ that are isomorphic to free joins with a lattice point are often called {\em lattice pyramids}, i.e., $P \cong \conv(0,F)$, where $F$ is a facet of $P$ and $0$ a vertex of $P$ of lattice distance one\footnote{This means that there is an integral functional that evaluates to $1$ on $F$.} from $F$.

{\em Notational convention:} As in \cite{Nil24}, a lattice point is also considered a lattice pyramid.

Of course, most Gorenstein polytopes are not free joins. However, it follows from our main result that this is always the case if the index is roughly in the upper half of its possible range.

\begin{theorem}
    If $P$ is a $d$-dimensional Gorenstein polytope of index $r > \frac{d+2}{2}$, then $P$ is isomorphic to a free join of Gorenstein polytopes.
    \label{index}
\end{theorem}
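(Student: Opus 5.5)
We translate to the Gorenstein cone and argue via the dual polytope. Let $P$ have index $r$ and consider the cone $C=\R_{\ge 0}(P\times\{1\})\subset M_\R\oplus\R$. Since $P$ is Gorenstein, $C$ is a Gorenstein cone: there is a unique interior lattice point $m_C=(rm,r)$ with $\pro{u_F}{m_C}=1$ for every primitive facet normal $u_F$. The dual cone $\dual{C}$ is then also Gorenstein, with distinguished point $n=(0,1)$ (the degree functional), and the support polytope $\dual{P}=\{y\in\dual{C}:\pro{y}{m_C}=1\}$ is a Gorenstein polytope of index $r$ in dimension $d$. In this language a free join decomposition of $P$ is a splitting of $C$ as a direct sum $C_1\oplus C_2$ of Gorenstein cones compatible with the lattice. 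Equivalently, the dual cone splits, which on the level of $\dual{P}$ means $\dual{P}$ is a Cayley polytope $\dual{P_1}\cayley\dual{P_2}$ of a special kind: the degree functional $n$ decomposes as $n=n_1+n_2$ with $n_i$ lattice points of $\dual{C}$ and $\dual{C}=\dual{C_1}+\dual{C_2}$ with $n_i\in \dual{C_i}$.

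The plan is to produce such a splitting of $n$. Write $n$ as a sum of lattice points of $\dual{P}$-level zero, i.e. $n=\sum_{j=1}^{r} v_j$ with $v_j\in\dual{C}\cap\{\pro{\cdot}{(0,1)}$-level$\}$; concretely, a Gorenstein polytope of index $r$ admits such a representation because $n$ lies in $r$ times the degree-one slice of $\dual{C}$. Then use the hypothesis $r>\frac{d+2}{2}$, that is, $2r>d+2=\dim C+1$. The key counting step is this. Consider the $r$ summands as lattice points spanning faces of $\dual{C}$. By a Carathéodory-type (or Helly/pigeonhole) argument in a space of dimension $d+1$, the $r$ points cannot all be in general position relative to each other. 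More precisely, we aim to show that the set $\{v_1,\dots,v_r\}$ splits into two nonempty groups whose linear spans $L_1,L_2$ intersect trivially and satisfy $L_1\cap\dual{C}+L_2\cap\dual{C}=\dual{C}$. Here the inequality $2r>d+2$ is exactly what is needed to guarantee that some $v_j$ cannot be "shared" between both halves.

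The main obstacle is this last step. Turning a dimension count into an actual direct-sum decomposition of the cone with the correct lattice, so that the join is free over $\Z$ and not only over $\R$, is the heart of the main decomposition theorem (Theorem~\ref{main}). I would therefore try to deduce the statement from Theorem~\ref{main} directly. I expect that theorem to assert that $P$ is a free join whenever some combinatorial invariant (for instance the degree of the $h^*$-polynomial, or a mixed-degree quantity) is small relative to the index. By Ehrhart reciprocity a Gorenstein polytope of index $r$ has $\deg h^*=d+1-r$, so $r>\frac{d+2}{2}$ gives $\deg h^*<r-1$, i.e. $\deg h^* \le r-2$. This should be exactly the regime in which Theorem~\ref{main} forces a nontrivial free-join factor. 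The remaining work is then to check that both factors are Gorenstein. For that I would use that faces and dual faces of a Gorenstein cone splitting as a direct sum inherit the distinguished interior point componentwise, $m_C=m_1+m_2$, so each summand is a Gorenstein cone. Finally I would handle the degenerate case of a lattice point factor using the notational convention that a lattice point counts as a lattice pyramid.
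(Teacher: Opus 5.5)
Your fallback route is the paper's proof. The condition $r>\frac{d+2}{2}$ is equivalent to $s=d+1-r\le r-2$, i.e.\ to negative CY-dimension $q=r-s=2r-d-1\ge 2$. Theorem~\ref{main} then yields a free join of $q\ge 2$ Gorenstein polytopes (centrally thin ones, possibly plus one of CY-dimension zero), so the Gorenstein property of the factors and lattice-point factors need no separate treatment.

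Discard the exploratory direct argument, though, because its first step is false even in this index range. Writing $n$ as a sum of $r$ lattice points of $\dual{P}$ is equivalent to a Cayley decomposition of $P$ into $r$ factors. No such decomposition exists, for example, for the twofold lattice pyramid $\conv(0,2e_1,2e_2,2e_3,e_4,e_5)$ over $2\Delta_3$, which has dimension $5$ and index $4$.
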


The strict inequality in Theorem~\ref{index} is optimal. For example, let us consider the standard simplex $\Delta_n := \conv(0, e_1, \ldots, e_n) \subset \R^n$. Then $\Delta_n\times\Delta_n$ is a Gorenstein polytope of dimension $d=2n$ with index $r=n+1=\frac{d+2}{2}$ that is not a free join, see \cite[Remark~2.9]{Nil24}.

\subsection{Reformulation via degree and the Batyrev-Juny conjecture}

In order to understand the context and the proof of Theorem~\ref{index}, it is instructive to reformulate the index of a Gorenstein polytope using notation from Ehrhart theory (cf. \cite{batyrev_multiples_2007}).

Given a lattice polytope $P$ of dimension $d$ its {\em codegree} and {\em degree} are defined as follows:
\[
  \codeg P:=\min\{r\geq1:\relint(rP)\cap M\neq\varnothing\},
  \qquad
  \deg P:=d+1-\codeg P.
\]

It follows from Ehrhart-theoretic reciprocity that $\codeg(P) \in \{1, \ldots, d+1\}$ and $\deg(P) \in \{0, \ldots, d\}$. The degree of a lattice polytope equals the degree of its $h^*$-polynomial, the numerator polynomial of the rational generating series of its Ehrhart polynomial. As such, it is a fundamental invariant of a lattice polytopes that can be regarded as its `lattice dimension', and thus there has been lots of study on lattice polytopes of given degree. 

Note that for a Gorenstein polytope the codegree is precisely its index. Hence, Theorem~\ref{index} can be reformulated as follows: {\em Gorenstein polytopes of dimension $d$ and degree $s$ with $d > 2s$ are free joins of Gorenstein polytopes.} This proves Conjecture~2.8 in \cite{Nil24}, called refined Batyrev-Juny conjecture. The name stems from the following result conjectured by Batyrev and Juny in \cite{BJ10} and proven by the second author in \cite{Nil24}.

\begin{theorem}
    If $P$ is a $d$-dimensional Gorenstein polytope of degree $s$ with $d \ge 3s$, then $P$ is a lattice pyramid.\label{bj}
\end{theorem}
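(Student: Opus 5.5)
The plan is to derive Theorem~\ref{bj} from Theorem~\ref{index} by induction on $d$, using how degree and dimension behave under free joins. Let $P$ be a $d$-dimensional Gorenstein polytope of degree $s$ with $d \ge 3s$. If $P$ is a lattice point we are done by the notational convention, so assume $d\ge 1$. Since $d \ge 3s \ge 2s$, we have $d>2s$ unless $d=s=0$, which is the point case. By the reformulation of Theorem~\ref{index}, $P \cong P_1 \circ_\Z P_2$ with $P_1,P_2$ Gorenstein.

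Next I record the invariants of a free join. From the definition, $\dim(P_1\circ_\Z P_2)=d_1+d_2+1$, where $d_i=\dim P_i$. The index is additive, $r = r_1+r_2$, because $(r_1+r_2)(P_1\circ_\Z P_2)$ contains the interior point $(m_1,m_2,r_2)$, where $m_i$ is the interior point of $r_iP_i$, and it is reflexive with respect to that point. Hence
\[
s=d+1-r=(d_1+1-r_1)+(d_2+1-r_2)=s_1+s_2 .
\]
So $d_1+d_2+1 \ge 3(s_1+s_2)$, and therefore for some $i$, say $i=1$, we have $d_1 \ge 3s_1$. Indeed, otherwise $d_1\le 3s_1-1$ and $d_2\le 3s_2-1$, which would give $d\le 3s-1$.

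Now induct. Since $d_1<d$, the induction hypothesis applied to $P_1$ shows that $P_1$ is a lattice pyramid, that is, either a lattice point or $P_1\cong\conv(0,F_1)$ with $0$ at lattice distance one from the facet $F_1$.
\begin{itemize}
\item If $P_1$ is a point, then $P=P_1\circ_\Z P_2$ is by definition a lattice pyramid over $P_2$: the apex is $P_1\times\{0\}\times\{0\}$, and $P_2$ lies at height $1$ in the last coordinate.
\item Otherwise, using associativity of the free join, $P_1\cong\{pt\}\circ_\Z F_1$, and a lattice pyramid over a facet of lattice distance one is exactly a free join with a point. Then $P\cong \{pt\}\circ_\Z (F_1\circ_\Z P_2)$ is a lattice pyramid.
\end{itemize}

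I expect the main obstacle to be making the associativity and the point identifications precise up to unimodular equivalence. This means checking that $(Q\circ_\Z R)\circ_\Z S\cong Q\circ_\Z(R\circ_\Z S)$ via an explicit lattice isomorphism of the height coordinates, and that the facet $F_1$ appearing in the pyramid is itself Gorenstein, so that the inner join still makes sense. For the latter, I would argue that $P_1$ Gorenstein of index $r_1$ forces $F_1$ to be Gorenstein of index $r_1-1$, which is the standard pyramid fact. If that check proves awkward, the associativity step can be avoided altogether by directly exhibiting the apex of $P_1$, embedded in $P$, as a vertex at lattice distance one from the convex hull of all the other vertices of $P$.
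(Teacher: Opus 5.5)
Your proof is correct, but it takes a different route from the paper's. The paper uses the full decomposition Theorem~\ref{main}. There, $d\ge 3s$ translates to $q\ge s+1$, so $P$ is a free join of at least $s+1$ Gorenstein polytopes. Their degrees are nonnegative and sum to $s$, so some factor has degree $0$, and Lemma~\ref{pyramid} finishes.

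You use only the two-factor statement Theorem~\ref{index}. Around it you add four ingredients:
\begin{enumerate}
\item additivity of $\dim+1$ and of the degree under free joins;
\item a pigeonhole step giving $d_i\ge 3s_i$ for some factor;
\item induction on $d$;
\item associativity of $\circ_\Z$ to carry the pyramid apex of $P_1$ up to $P$.
\end{enumerate}
This matches the paper's remark that Theorem~\ref{bj} follows from Theorem~\ref{index}, as in \cite[Prop.~2.12]{Nil24}.

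What each route buys:
\begin{itemize}
\item The paper's route is a one-line count with no induction. It also shows why $3s$ is the threshold: at most $s$ factors can have positive degree. The cost is that it needs the finer $q$-factor structure of Theorem~\ref{main} and Lemma~\ref{pyramid}.
\item Your route shows that the binary splitting alone suffices. The cost is the induction and the free-join bookkeeping.
\end{itemize}

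Two small remarks on the steps you flagged as obstacles:
\begin{itemize}
\item You do not need $F_1$ to be Gorenstein. The free join of arbitrary lattice polytopes is well defined, and $\{\mathrm{pt}\}\circ_\Z(F_1\circ_\Z P_2)$ is a lattice pyramid regardless.
\item Associativity of $\circ_\Z$ up to unimodular equivalence is immediate. The homogenized cone of $Q\circ_\Z R$ is $C_Q\oplus C_R$ in $\overline{M}_Q\oplus\overline{M}_R$, with height equal to the sum of the heights, via $(a,b,t,h)\mapsto((a,h-t),(b,t))$.
\item Commutativity, which you use in ``say $i=1$'', is given by $(a,b,t)\mapsto(b,a,1-t)$.
\end{itemize}
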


As already shown in \cite[Prop.~2.12]{Nil24} Theorem~\ref{bj} is an easy consequence of Theorem~\ref{index}.

Batyrev and Juny also conjectured that there is up to isomorphisms only one Gorenstein polytope of dimension $d=3s-1$ and degree $s$ which is not a lattice pyramid. This part of the conjecture was left open in \cite{BJ10}. It was first proven in the Masterthesis \cite{Knu25} of the first author. 

\begin{corollary}
Let $P$ be a Gorenstein polytope of dimension $3s-1$ and degree $s>0$. Then $P$ is not a lattice pyramid if and only if $P$ is isomorphic to the free join of $s$ copies of $[0,1]^2$.
\label{equality}
\end{corollary}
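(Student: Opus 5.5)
We prove the two directions separately, using Theorem~\ref{index} recursively for the hard one.

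\emph{The free join is a non-pyramid of the right invariants.} The square $[0,1]^2$ is Gorenstein of dimension $2$ and index $2$, hence degree $1$. Under free joins, dimension is additive plus one per join and codegree is additive, so degree is additive. The free join of $s$ copies of $[0,1]^2$ therefore has dimension $2s+(s-1)=3s-1$, index $2s$ and degree $s$. It is not a lattice pyramid, because a free join is a lattice pyramid only if one of its factors is. This holds since every vertex lies in some factor, and a vertex at lattice distance one from the opposite facet would make its factor a pyramid; the square has no such vertex.

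\emph{Every non-pyramid of these invariants is this join.} We induct on $s$. Let $P$ be Gorenstein of dimension $d=3s-1$ and degree $s$, and not a lattice pyramid.

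1. Since $d=3s-1>2s$ for $s\ge1$, Theorem~\ref{index} (in its reformulation via degree) applies. Thus $P\cong P_1\circ_\Z P_2$ with $P_i$ Gorenstein of dimension $d_i$ and degree $s_i$, where
\[
d_1+d_2+1=3s-1, \qquad s_1+s_2=s.
\]

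2. Neither $P_i$ is a lattice pyramid (in particular neither is a point), because otherwise $P$ would be one.

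3. By the contrapositive of Theorem~\ref{bj}, a non-pyramid of degree $s_i$ satisfies $d_i\le 3s_i-1$. This also forces $s_i\ge1$, since degree $0$ Gorenstein polytopes are unimodular simplices, which are pyramids or points.

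4. Summing the bounds gives $d_1+d_2+1\le 3s-1$. We have equality, so $d_i=3s_i-1$ for both $i$.

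5. By induction on $s$, each $P_i$ is a free join of $s_i$ squares. Hence $P$ is a free join of $s$ squares.

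\emph{Base case $s=1$.} Here $d=2$ and $P$ has index $2$. Theorem~\ref{index} does not yield a nontrivial splitting: the decomposition it provides would have $s_1+s_2=1$, so one factor has degree $0$ and is a pyramid, which we excluded. We therefore classify directly. A $2$-dimensional lattice polygon $P$ with $2P$ reflexive has codegree $2$, so $P$ has no interior lattice points and $2P$ has exactly one. Lattice polygons without interior points are, up to isomorphism, the following: $2\Delta_2$, or polygons of lattice width one, i.e.\ Cayley polygons $\conv([0,a]\times\{0\},[0,b]\times\{1\})$. Gorensteinness of index $2$ forces
\begin{itemize}
\item $a=b=1$, giving the square, or
\item $\{a,b\}=\{0,1\}$-type cases, giving $\Delta_2$ and pyramids,
\end{itemize}
while $2\Delta_2$ has index $1$ relative to being reflexive scaled. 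Among the remaining polygons, the square is the only non-pyramid, which settles $s=1$.

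\emph{Main obstacle.} The delicate step is making the additivity claims and the ``free join is a pyramid iff a factor is'' claim rigorous. In particular, we must check that a lattice-distance-one vertex in the join really comes from a factor. The base-case classification is routine but must be done carefully.
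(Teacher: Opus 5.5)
Your proof is correct and follows the paper's route. The paper proves the more general Corollary~\ref{close} by the same induction: Theorem~\ref{index} splits $P\cong F\circ_\Z G$, neither factor is a pyramid, and the contrapositive of Theorem~\ref{bj} gives $d_F=3s_F-k_F$, $d_G=3s_G-k_G$ with $k_F,k_G\ge 1$ and $k_F+k_G=k+1$. For $k=1$ this is exactly your steps 1--5 with base case $\mathcal{P}_1=\{[0,1]^2\}$, and your check that a free join of squares is not a pyramid supplies the converse the paper leaves implicit.

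Your base case has some slips, but they do not affect the conclusion:
\begin{itemize}
\item Theorem~\ref{index} does not apply at all when $s=1$, since then $d=2s$; your claim ``$3s-1>2s$'' in step 1 needs $s\ge 2$.
\item For $\conv([0,a]\times\{0\},[0,b]\times\{1\})$, index $2$ means exactly $a+b=2$. So $\Delta_2$, which has index $3$, does not occur, whereas the pyramid $\conv((0,0),(2,0),(0,1))$ does.
\item $2\Delta_2$ is not Gorenstein at all, since $4\Delta_2$ has three interior lattice points.
\end{itemize}
With these corrections the square is still the only non-pyramid, so the base case stands.
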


In fact, this is a special case of the following generalization (cf. \cite{Knu25}). For this, let us define for $k \in \Z_{\ge1}$, the set $\mathcal{P}_k$ as the finite set of representatives for the isomorphism classes of Gorenstein polytopes $P$ of degree $1 \le s\le k$ and dimension $3s-k$ that are not lattice pyramids.

\begin{corollary}
    Any Gorenstein polytope $P$ of degree $s$ and dimension $3s-k$ with $k \ge 1$ that is not a lattice pyramid is isomorphic to a polytope in $\mathcal{P}_k$ or to a free join of polytopes in $\mathcal{P}_{k'}$ with $1\le k' \le k$.
    \label{close}
\end{corollary}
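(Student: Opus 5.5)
We argue by strong induction on $s$ (equivalently on $d=3s-k$), taking Theorem~\ref{index} and the free-join facts recalled in the introduction as given. Let $P$ be a Gorenstein polytope of degree $s$ and dimension $d=3s-k$, $k\ge1$, and assume $P$ is not a lattice pyramid.

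If $s\le k$, then $P$ lies in $\mathcal{P}_k$ by definition, and we are done. So assume $s>k$. Then $d=3s-k>2s$, and by the reformulation of Theorem~\ref{index} (in terms of degree, $d>2s$), $P\cong P_1\circ_\Z P_2$ with $P_1,P_2$ Gorenstein polytopes.

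We need the basic additivity of free joins. If $P_i$ has dimension $d_i$ and codegree $r_i$, then
\[
\dim(P_1\circ_\Z P_2)=d_1+d_2+1,\qquad \codeg(P_1\circ_\Z P_2)=r_1+r_2 .
\]
The codegree formula follows because $r(P_1\circ P_2)$ at height $t$ is $tP_2$ glued to $(r-t)P_1$, so an interior point needs interior points of both. Hence, with $s_i=d_i+1-r_i$,
\[
\deg(P)=s_1+s_2 .
\]
Moreover, $P_1\circ P_2$ is a lattice pyramid if and only if $P_1$ or $P_2$ is. Indeed, a lattice point factor gives a pyramid by the convention. Conversely, if some vertex is the apex at lattice distance one, that vertex lies in $P_1$ or $P_2$, and then that factor is itself a pyramid over the corresponding face. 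This converse direction is the point I expect to be the main obstacle: one must check that the apex property transfers to the factor, using the description of the facets of a free join as joins of a facet of one factor with the other factor. Since $P$ is not a pyramid, neither factor is a pyramid or a point. In particular $s_i\ge1$, because degree $0$ Gorenstein polytopes are unimodular simplices, and these are pyramids.

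Now write $d_i=3s_i-k_i$. Then
\[
3s-k=d_1+d_2+1=3s-(k_1+k_2)+1,
\]
so $k_1+k_2=k+1$. Since $d_i\le 3s_i-1$ is forced for non-pyramids by Theorem~\ref{bj}, we get $k_i\ge1$, and hence $1\le k_i\le k$. Because $s_i<s$, the induction hypothesis applies to each $P_i$: it is isomorphic to an element of $\mathcal{P}_{k_i}$ or to a free join of polytopes in $\mathcal{P}_{k'}$ with $k'\le k_i\le k$.

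Substituting these decompositions, and using that the free join is associative up to isomorphism, expresses $P$ as a free join of polytopes in various $\mathcal{P}_{k'}$ with $1\le k'\le k$. This is the claim.
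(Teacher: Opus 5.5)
Your proof is correct and follows the paper's argument essentially step for step: induction, Theorem~\ref{index} to split $P$ as a free join once $s>k$, Theorem~\ref{bj} to get $k_i\ge 1$ for the non-pyramid factors, and the count $k_1+k_2=k+1$. One small remark: the direction you flag as the main obstacle (if $P$ is a pyramid then so is a factor) is never used; what you actually need is the other direction, which is immediate because a pyramid factor is $\cong \{\mathrm{pt}\}\circ_\Z F$ and free joins are associative.
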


Note that this proves Corollary~\ref{equality}. One only has to observe that $[0,1]^2$ is the only Gorenstein polygon of degree one that is not a lattice pyramid, so we have $\mathcal{P}_1=\{[0,1]^2\}$. 

\subsection{Applications to thin Gorenstein polytopes and the stringy $E$-polynomial}

Motivated by mirror symmetry of Calabi-Yau varieties, the notion of {\em stringy $E$-polynomials} of Gorenstein polytopes was introduced by Batyrev and the second author in \cite{BN08}. In that article, several conjectures on this polynomial were phrased, the most basic one regarding its expected degree. In \cite[Section~6]{NS13} an explicit approach to proving this question was described by Schepers and the second author. A partial result in this direction was later achieved in \cite[Corollary~6.12]{borger_thin_2023}, which was however not yet sufficient. Here, we can give now a completely satisfactory answer. For this, it is natural to consider the notion of thin polytopes.

In local Ehrhart theory, not only the classical $h^*$-polynomial of a lattice polytope but also a more intricate version, called its local $h^*$-polynomial, is being studied. It has a direct interpretation as the Hodge vector of a lattice polytope, see \cite{borisov_string_2003,preserving}, and plays a crucial part in the definition of stringy $E$-polynomials \cite{BN08}. A lattice polytope is called {\em thin} if the local $h^*$-polynomial is zero. The study of thin polytopes has been originated for simplices by Gelfand, Kapranov and Zelevinsky 
\cite{GKZ}, and has been continued for polytopes in \cite{borger_thin_2023}. In the last paper also a characterization of thin Gorenstein polytopes was given \cite[Theorem~6.3]{borger_thin_2023}. By strengthening this characterization in Theorem~\ref{thin-char} in the next section, the following result can be obtained.

\begin{theorem}
Let $P$ be a $d$-dimensional Gorenstein polytope of index $r$. Then 
\[\Est(P;u,v) = 0 \; \text{ if and only if } \; P \text{ is thin.}\]
Otherwise, the stringy $E$-polynomial $\Est(P;u,v)$ of $P$ has degree $2n$, where $n:=d+1-2r$ is the {\em Calabi-Yau dimension} of $P$.
\label{thin-thm}
\end{theorem}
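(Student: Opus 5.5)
We will follow the approach outlined in \cite[Section~6]{NS13}, where the stringy $E$-polynomial of a Gorenstein polytope is expanded via the Batyrev--Borisov formula. Write $\Est(P;u,v)$ as a sum over faces $F$ of the Cayley/Gorenstein cone of products of a $\tilde S$-type polynomial (a Kalai/$g$-type contribution from the face lattice of the dual cone) and the local $h^*$-polynomial $\tilde S(F)$ of the face. In this expansion the coefficient of top degree $u^nv^n$, $n=d+1-2r$, is controlled by the local $h^*$-polynomials of those faces $F$ which are themselves Gorenstein of the ``right'' Calabi--Yau dimension. The strategy of \cite{NS13}, which we adopt, reduces the claim to two facts. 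First, if $P$ is thin, then every term vanishes identically, so $\Est(P;u,v)=0$. Second, if $P$ is not thin, then some distinguished coefficient (the one of $u^n v^n$, equivalently the leading term $(uv)^n$ up to sign) is a sum of nonnegative contributions, at least one of which is strictly positive. Since the degree of $\Est$ is always at most $2n$ by the symmetry $\Est(u,v)=(uv)^n\Est(u^{-1},v^{-1})$ and by the constant term, this positivity yields degree exactly $2n$.

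\textbf{Step 1: vanishing.} We will show that thinness of $P$ forces thinness of all relevant faces appearing in the expansion. Here we would invoke the characterization of thin Gorenstein polytopes, Theorem~\ref{thin-char}, in the strengthened form proven in the next section. We expect it to say that a Gorenstein polytope is thin if and only if it is a free join (via Theorem~\ref{main}) one of whose Gorenstein factors is a lattice pyramid-type factor of ``too large'' index, e.g.\ a factor with $\deg<\codeg$ in the appropriate sense. Under a free join, $\Est$ is multiplicative (the Gorenstein cones are direct sums, so the Batyrev--Borisov sum factorizes). Thus it suffices to see that $\Est$ of the offending factor vanishes. We would check this from the factor's combinatorial type: for a lattice pyramid, or a polytope of negative Calabi--Yau dimension, the $E$-polynomial is $0$ by \cite{BN08}.

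\textbf{Step 2: nonvanishing and degree.} Conversely, suppose $P$ is not thin. Using Theorem~\ref{main} and Theorem~\ref{thin-char}, decompose $P$ as a free join of Gorenstein polytopes $P_i$, each of which is non-thin and not a free join. By multiplicativity of both $\Est$ and the Calabi--Yau dimension (note $n$ is additive under free joins, since dimensions add plus one and indices add), it suffices to treat a single indecomposable non-thin $P$. By Theorem~\ref{index}, such a $P$ has index $r\le\frac{d+2}{2}$, i.e.\ $n\ge0$. Then, following \cite[Section~6]{NS13}, the coefficient of $(uv)^n$ in $\Est$ equals, up to a global sign, the coefficient of $t^{n}$-type in $\tilde S(P)$ plus nonnegative contributions from proper faces. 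The $\tilde S$-coefficients of lower faces enter with nonnegative $g$-polynomial weights, and the local $h^*$ is nonnegative and symmetric. Since $P$ is not thin, $\tilde S(P)\neq0$, and its middle/top admissible coefficient is positive by unimodality-type results from \cite{borger_thin_2023}.

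\textbf{Main obstacle.} The delicate point is Step 2. Nonthinness gives $\tilde S(P)\ne0$ but not directly that the specific coefficient feeding $(uv)^n$ is positive. This is exactly where \cite[Corollary~6.12]{borger_thin_2023} fell short. We would first try to show that for an indecomposable non-thin Gorenstein polytope the local $h^*$-polynomial is nonzero in degree $r$, i.e.\ the lowest possible degree, using that $\relint(rP)$ contains the unique interior point and the bound $r\le \frac{d+2}{2}$ from Theorem~\ref{index}. This bound is what guarantees that this degree lies in the range contributing to the top coefficient. Failing that, we would argue by induction on $d$ over the face contributions.
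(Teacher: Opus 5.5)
Your Step~1 is essentially the paper's argument, once Theorem~\ref{thin-char} is stated correctly. A thin Gorenstein polytope of positive dimension is either centrally thin, or a free join of two Gorenstein polytopes one of which is centrally thin. A centrally thin polytope has $d=2s$, hence Calabi--Yau dimension $-1$, so its stringy $E$-polynomial vanishes by \cite[Corollary~3.7]{NS13}. Multiplicativity under free joins then gives $\Est(P;u,v)=0$. This direction is where the new input lies: \cite{borger_thin_2023} only produced Gorenstein joins, for which multiplicativity of $\Est$ is not available. So your remark that the earlier work ``fell short'' in the nonvanishing direction is misplaced; in fact \cite[Corollary~6.12]{borger_thin_2023} is one of the tools for that direction.

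The genuine gap is Step~2: you never prove that any coefficient of $\Est(P;u,v)$ is nonzero. Two of your assertions are unsupported: that the $(uv)^n$-coefficient is a sum of nonnegative terms including one from the local $h^*$-polynomial of $P$, and that the relevant coefficient is positive ``by unimodality-type results''. Your fallback cannot work either. It uses only the interior lattice point of $rP$ and the bound $r\le\frac{d+2}{2}$, and never uses non-thinness. Both hypotheses hold for $P=[0,1]^2$, which is not a free join and whose local $h^*$-polynomial vanishes. The interior point gives $h^*_s(P)=1$ for the ordinary $h^*$-polynomial, but in the local $h^*$-polynomial it can be cancelled by boundary contributions. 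Also, $r\le\frac{d+2}{2}$ only yields $n\ge-1$; $n\ge0$ holds because a non-thin $P$ is not trivially thin, i.e.\ $d<2s$.

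The missing idea is to pass to the dual polytope:
\begin{enumerate}
\item By \cite[Corollary~6.12]{borger_thin_2023}, $P^\times$ is not thin.
\item So by \cite[Proposition~6.9]{borger_thin_2023}, the local $h^*$-polynomial of $P^\times$ has degree exactly $\deg P^\times=s$.
\item By \cite[Lemma~6.1]{NS13}, this makes the empty face contribute to the constant coefficient of $\Est(P;u,v)$, so that coefficient is nonzero.
\item Poincar\'e duality $\Est(P;u,v)=(uv)^n\Est(P;u^{-1},v^{-1})$ \cite[Proposition~3.2(2)]{NS13} and polynomiality (\cite[Corollary~3.7]{NS13}, \cite[Remark~3.4(1)]{NS13}) then force degree exactly $2n$.
\end{enumerate}
Neither the reduction to free-join-indecomposable factors nor Theorem~\ref{index} is needed for this direction.
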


This proves the first part of Conjecture~4.10 in \cite{BN08}, see also Conjecture~3.3(2) in \cite{NS13}, and also answers positively Question~6.3(1) in \cite{NS13}.

\medskip

The paper is organized as follows. In Section 2 we present our main decomposition theorem, describe all necessary notions such as degree joins and centrally thin polytopes, and give the proofs of the results stated above. Sections 3 and 4 contain the proof of the main result. In Section 3 we analyze the Cayley result from \cite{HNP09} in detail which will eventually lead to the existence of long degree joins. In Section 4 it will be shown that in our situation they even split as free joins.

\subsection*{Acknowledgment} An independent purely-human proof of Corollary~\ref{equality} is contained in the Masterthesis \cite{Knu25} of the first author. The general results and a first version of the paper were obtained using ChatGPT 5.6 Sol using essentially the same ingredients (Cayley polytopes, Gorenstein joins, nonnegativity of the mixed degree) as in \cite{Nil24} by the second author and in the master thesis \cite{Knu25} of the first author supervised by the second author, but combining them in a fresh way. This final version has been largely rewritten and restructured. For some editing purposes also ChatGPT 6 Astra was used. Some new results, new notions and comments were manually added and incorporated to distill the essential features: a weaker notion of join that we call degree join (see Section~2), a successive use of the Cayley decomposition theorem from \cite{HNP09} (see Section~3), and the fact that splitting is induced from vertex-spanning dual Gorenstein polytopes (see Section~4). The last point has also already appeared in the Masterthesis \cite{Knu25}. This work is funded by the Deutsche Forschungsgemeinschaft (DFG, German Research Foundation) – 539867500 as part of the research priority program Combinatorial Synergies. 

\section{The main result}

In this section we describe the notions needed for our main decomposition result and give the proofs of its applications.

\subsection{Cayley polytopes}

For lattice polytopes $P_0,\ldots,P_l\subset M_{\R}$, their {\em Cayley polytope} is
\[
  P_0\cayley\cdots\cayley P_l
  :=\conv(P_0\times e_0,\ldots,P_l\times e_l).
\]
It is a lattice polytope of dimension
\begin{equation}\label{eq:cayley-dimension}
  \dim(P_0\cayley\cdots\cayley P_l)=\dim(P_0+\cdots+P_l)+l.
\end{equation}

There is a well-known reformulation of Cayley polytopes. For this, let us recall that a lattice simplex $P$ of dimension $d$ is called {\em unimodular} if it is isomorphic to $\conv(0,e_1,\ldots,e_d)$. A lattice polytope is a unimodular simplex if and only if its degree is zero. In particular, degree $0$ implies lattice pyramid.

Now, a lattice polytope $P$ is isomorphic to a Cayley polytope of $l+1$ lattice polytopes if and only if it admits a lattice projection\footnote{A {\em lattice projection} $P \subset \MRR$ onto $P' \subset M'_{\R}$ is an affine-linear map that maps the lattice $M$ surjectively onto $M'$.} onto a unimodular $l$-simplex. In this case, we say $P$ has a {\em Cayley decomposition} with factors $F_0, \ldots, F_l$ where these are the fibers over the vertices of the unimodular simplex \cite[Proposition~2.3]{BN08}. Note that necessarily $F_0, \ldots, F_l$ are disjoint faces of $P$ with $P=\conv(F_0, \ldots, F_l)$. We will also recall a useful dual characterization from \cite{BN08} in Section~3.2.

The following result (\cite[Theorem~3.1]{HNP09}) shows that Gorenstein polytopes of large index admit Cayley decompositions. Note that for $d >2s$ this will be drastically improved by the main decomposition theorem.

\begin{theorem}
Let $P$ be a Gorenstein polytope of dimension $d$ and degree $s>0$. If $d \ge 2s$, then $P$ is a Cayley polytope of lattice polytopes in dimension $\le 2s-1$.\label{cayley}
\end{theorem}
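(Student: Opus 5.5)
The plan is to use the duality between Gorenstein cones and to reduce the statement to a counting problem for a decomposition of the degree element of the dual cone. Let $C\subset M_\R\oplus\R$ be the cone over $P\times\{1\}$ and $r=d+1-s$ the index. Then $C$ is Gorenstein with interior generator $m_C$ at height $r$. The dual cone $C^\vee$ is again Gorenstein, with degree element $n_C$ (the height functional). Define $\dual{P}:=\{y\in C^\vee:\pro{y}{m_C}=1\}$, the dual Gorenstein polytope. Since $\pro{n_C}{m_C}=r$, the point $n_C$ is the unique interior lattice point of $r\dual{P}$, so $\dual{P}$ also has dimension $d$ and index $r$.

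The dual characterization from \cite{BN08} says that $P$ has a Cayley decomposition with $l+1$ factors if and only if $n_C=y_0+\cdots+y_l$ with nonzero lattice points $y_i\in C^\vee$. By \eqref{eq:cayley-dimension}, the factors then satisfy $\dim(F_0+\cdots+F_l)=d-l$. So the statement is equivalent to finding such a decomposition of $n_C$ with $l\ge d-2s+1=r-s$, i.e. with at least $r-s+1$ summands.

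To produce many summands, I would write $n_C$ as a sum of lattice points of $C^\vee$ of smallest possible $m_C$-height, using a greedy peeling procedure:
\begin{itemize}
\item Start from $n_C$, which has height $r$.
\item Repeatedly subtract a nonzero lattice point $y\in C^\vee$ of small height such that the remainder still lies in $C^\vee$. This is possible whenever the remainder lies on a proper face of $C^\vee$ or is not yet primitive in the required sense.
\item Each step lowers the height, and a summand of height $h$ contributes $h$ to the total height $r$.
\end{itemize}
It therefore suffices to show that, on average, summands can be chosen with height at most about $r/(r-s+1)$. Concretely, all but at most $s$ units of the total height $r$ should be absorbed by summands of height exactly $1$, i.e. by lattice points of $\dual{P}$.

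To bound the leftover I would use the Ehrhart-theoretic input that $P$ has degree $s$. For $k<r$ the polytope $kP$ has no interior lattice points, which is equivalent to the dual statement that every lattice point of $C^\vee$ of height $k<r$ lies on the boundary. Combined with $h^*$-symmetry of the Gorenstein polytope $\dual{P}$, this should show that the remainder of height $k$ can always be split off a height-one point as long as $k>s$. That yields at least $r-s$ summands of height $1$ plus one final summand, hence $l\ge r-s$ as required.

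The main obstacle is this peeling step: guaranteeing that, at each stage, some lattice point of $\dual{P}$ can be subtracted while keeping the remainder inside $C^\vee$. This amounts to an integer-decomposition-type property that fails for general polytopes. I would first try to prove it using the fact that the remainder lies in the relative interior of a face of $C^\vee$ whose own Gorenstein codegree is controlled by $s$. Failing that, I would fall back on the original argument of \cite{HNP09}, which projects along a maximal chain of lattice points of $P$ and bounds the dimension of the fibres by $2s-1$ using the codegree inequality for faces.
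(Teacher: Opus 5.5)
\textbf{There is a genuine gap: the key peeling step is asserted, not proved.} Your reduction itself is correct. It is the one used in \cite{HNP09}, which the paper reproduces in the proof of Lemma~\ref{lem}. By \cite[Proposition~2.3]{BN08} it suffices to write $n_C$ as a sum of $r-s+1$ nonzero lattice points of $C^\vee$: namely $r-s$ lattice points of $\dual{P}$ and one remainder of height $s>0$.

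All the content lies in one claim: a lattice point of $C^\vee$ of height $k>s$ always admits a lattice point of $\dual{P}$ that can be subtracted while staying in $C^\vee$. You only assert this (``this should show''), and you flag it yourself as the main obstacle. The inputs you name do not yield it. Neither $h^*$-symmetry of $\dual{P}$ nor the absence of interior lattice points of $C^\vee$ below height $r$ tells you whether a given boundary point of $C^\vee$ splits. The criterion ``lies on a proper face or is not yet primitive'' is also unrelated to whether a split exists. You also misdiagnose the difficulty. Integer decomposition can fail at heights up to the degree, but splitting off a height-one point above the degree works for every lattice polytope. No Gorenstein-specific analysis of faces and their codegrees is needed.

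\textbf{The missing idea is the fundamental parallelepiped of a lattice simplex.}
\begin{enumerate}
\item Choose a full-dimensional simplex $S$ with lattice vertices $v_0,\ldots,v_d$ in $\dual{P}$ whose cone contains $n_C$, for example from a triangulation of $\dual{P}$.
\item Write $n_C=\sum_i a_iv_i+\sum_i b_iv_i$ with $a_i\in\Z_{\ge0}$ and $0\le b_i<1$.
\item The point $\sum_i b_iv_i$ is a lattice point of the fundamental parallelepiped of $S$. Hence its height $\sum_i b_i$ is at most $\deg S$.
\item $\deg S\le\deg\dual{P}=s$, by Stanley's monotonicity of $h^*$-polynomials together with the fact that Gorenstein duality preserves the degree.
\item Therefore $\sum_i a_i\ge r-s$. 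Take $r-s$ of these vertices, with multiplicity, as the height-one summands. Lump everything else into $w$. Then $w$ is a lattice point of $C^\vee$ of height $s>0$, so it is nonzero.
\end{enumerate}
This gives the required decomposition in one step. The same argument applied to an arbitrary lattice point of height $k>s$ also shows that your greedy procedure never gets stuck.

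Your fallback description of \cite{HNP09} is inaccurate. For this theorem the argument there is exactly this simplex argument in the dual cone, not a projection along a chain of lattice points.
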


\subsection{Degree joins of lattice polytopes}

Let us consider several notions of joins. A lattice polytope $P$ is a {\em join} of faces $F_0, \ldots, F_l$ if $P = \conv(F_0, \ldots, F_l)$ and
\[\dim(P) = \dim(F_0) + \cdots + \dim(F_l) + l.\]
Equivalently, there is an affine-linear isomorphism (not necessarily, a unimodular equivalence) between $P$ and $F_0 \circ_\Z \cdots \circ_\Z F_l$.

If $P$ is a join of faces $F$ and $G$ such that there is also a Cayley decomposition of $P$ with factors $F$ and $G$, then $P$ is called {\em Cayley join} of the faces $F$ and $G$. 

Here, is a notion that has not yet been defined in the literature but will turn out to give just the right level of flexibility needed later.

\begin{definition}
    A  lattice polytope $P$ is an {\em degree join} of faces $F_0, \ldots, F_l$ if $P$ is a join of $F_0, \ldots, F_l$ and 
    \[\deg(P)=\deg(F_0) + \cdots + \deg(F_l),\]
    or equivalently
    \[\codeg(P)=\codeg(F_0) + \cdots + \codeg(F_l).\]
\end{definition}

Clearly, free joins are degree joins, see \cite[Remark~4.6]{NS13}.

For Gorenstein polytopes there is an even stronger notion (this is \cite[Lemma~5.13]{borger_thin_2023}). A Gorenstein polytope $P$ is a {\em Gorenstein join} of faces $F$ and $G$ if it is a Cayley join as well as a degree join of $F$ and $G$. We say a Gorenstein polytope is {\em reducible} if it is a Gorenstein join, and {\em irreducible} otherwise. We remark that if a Gorenstein polytope is a free join, then it is also a Gorenstein join (see \cite[Section~5]{borger_thin_2023}). Note that by convention a lattice point is reducible.

\begin{remark}
    Note that while given two arbitrary lattice polytopes $P_1, P_2$ the notion of the free join of $P_1, P_2$ is well-defined, the notion of a join, Cayley join, degree join, or Gorenstein join of $P_1,P_2$ is {\em not}. For a lattice polytope $P$ to be a join, Cayley join, degree join, or Gorenstein join of two faces $P_1,P_2$ is a {\em property} of $P$ -- not a construction. In particular, we do not have an `intrinsic lattice' for each factor but we always consider the ambient lattice of $P$ restricted to the affine hull of the factor.
\end{remark}

\begin{remark}
    It is important to note that while joins and free joins are `associative', Cayley joins and Gorenstein joins are not. For Cayley joins an example was given in \cite[Example~5.17]{borger_thin_2023}. For Gorenstein joins this is shown by the following example: Let $P \subset \R^6$ be given as the convex hull of the subsets $A,B,C$ defined as follows:
    \[\begin{aligned}
A&=\operatorname{conv}\{
(0,1,-1,0,-1,0),\,
(0,-1,-1,0,-1,0)\},\\[1mm]
B&=\operatorname{conv}\{
(-1,0,1,0,0,1),\,
(1,0,-1,0,0,1)\},\\[1mm]
C&=\operatorname{conv}\{
(-1,1,0,1,0,0),\,
(1,-1,0,-1,0,0), (0,0,0,-1,1,1),\,
(0,0,0,1,-1,-1)\}.
\end{aligned}\]
Then $A,B,C$ are reflexive polytopes (up to lattice translations), the first two are simply reflexive intervals, while the last one is a two-dimensional reflexive crosspolytope. Let $F := \conv(A,B)$. Then $F$ is free join of $A$ and $B$, and in particular Gorenstein. Moreover, $P$ is a Gorenstein join of $F$ and $C$. On the other hand, if one defines $G := \conv(B,C)$, then again $G$ is a free join of $B$ and $C$, and thus Gorenstein. However, $P$ is {\em not} a Cayley join, and thus not a Gorenstein join, of $A$ and $G$. We leave the computational verification of these statements to the reader.
\end{remark}

One nice feature of degree joins is indeed their associativity.

\begin{proposition}
Let $P$ be a lattice polytope with faces $F_0, \ldots, F_l$. Set $G := \conv(F_1, \ldots, F_l)$. Then the following two statements are equivalent:
\begin{enumerate}
    \item $P$ is a degree join of faces $F_0, \ldots, F_l$,
    \item $P$ is a degree join of the faces $F_0$ and $G$, and $G$ is a degree join of the faces $F_1, \ldots, F_l$.
\end{enumerate}
If this holds and $P$ is a Gorenstein polytope, then also $F_0, \ldots, F_l$ are Gorenstein polytopes.
\label{associative}
\end{proposition}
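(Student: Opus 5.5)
The plan splits into two parts of different difficulty: the join/dimension bookkeeping is elementary, while the degree bookkeeping hinges on one general inequality for joins.

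\textbf{Join part.} This is pure affine geometry. $P$ is a join of $F_0,\ldots,F_l$ if and only if $P$ is a join of $F_0$ and $G$ and $G$ is a join of $F_1,\ldots,F_l$. Indeed, we always have $\dim G\le \sum_{i\ge1}\dim F_i + (l-1)$ and $\dim P\le \dim F_0+\dim G+1$. The equality $\dim P=\sum_i\dim F_i+l$ therefore forces equality in both inequalities, and conversely the two equalities add up to it. Also, $G$ is a face of $P$ in either situation. Given (i), the affine hulls of the $F_i$ are affinely independent, so $\conv(F_1,\ldots,F_l)$ is a face of the join (the face where the "barycentric" coordinate of $F_0$ vanishes). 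Given (ii), $G$ is a face by hypothesis.

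\textbf{Key inequality.} The degree part reduces to one general fact: if $P$ is a join of faces $F$ and $G$, then
\[\codeg(P)\le \codeg(F)+\codeg(G),\]
or equivalently $\deg P\ge \deg F+\deg G$. To prove it, take $r=\codeg F$ and $t=\codeg G$, and lattice points $a\in\relint(rF)$ and $b\in\relint(tG)$. Since $P$ is the join, $(r+t)P$ contains $rF+tG$ in a way that puts $a+b$ into $\relint((r+t)P)$. To make this precise, use join coordinates: every point of $(r+t)P$ can be written as $x+y$ with $x\in\lambda F$, $y\in\mu G$ and $\lambda+\mu=r+t$. The interior consists of exactly those with $\lambda,\mu>0$, $x\in\relint(\lambda F)$ and $y\in\relint(\mu G)$. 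Then $a+b$ is a lattice point in $\relint((r+t)P)$.

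\textbf{Degree part.} With the inequality in hand, assume (i). We have
\[\codeg P\le\codeg F_0+\codeg G\le \codeg F_0+\sum_{i\ge1}\codeg F_i=\codeg P,\]
so both inequalities are equalities, which gives (ii). Conversely, (ii) gives equality by direct substitution.

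\textbf{Gorenstein part.} This is the main obstacle. I would first reduce to two factors: $G$ is again Gorenstein once the two-factor case is known, so induction handles the rest. So let $P$ be Gorenstein of index $r=\codeg F+\codeg G$, and let $m$ be the unique interior lattice point of $rP$. Decompose $m$ in join coordinates as $m=x+y$ with $x\in\relint(\lambda F)$ and $y\in\relint(\mu G)$.

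A facet of $P$ is of one of two forms: $\conv(F',G)$ with $F'$ a facet of $F$, or $\conv(F,G')$ with $G'$ a facet of $G$. For a facet of the first kind, take its primitive functional $u$, normalized to vanish on the facet. Then $u$ is constant ($=0$) on $G$, so $\langle u,m\rangle = \lambda\cdot(\text{distance data of }x/\lambda\text{ in }F)$, and this equals $1$ by reflexivity. The restriction of $u$ to the affine hull of $F$ is the primitive facet functional of $F'$; this needs that the lattice on $\operatorname{aff}(F)$ is the restricted lattice, so primitivity should be checked. Hence $x$ has distance $1$ from every facet of $\lambda F$ and lies in the interior.

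It remains to show that $\lambda$ is an integer, namely $\lambda=\codeg F$, and that $x$ is a lattice point. Here I would use the lattice point $a+b$ constructed in the key inequality: it lies in $\relint(rP)$, so it must equal $m$ because every interior lattice point of a reflexive polytope is the center. Hence $x=a$, $\lambda=\codeg F$, and $a$ is at lattice distance $1$ from all facets of $\codeg(F)\,F$, so $F$ is Gorenstein. The same argument applies to $G$. The delicate step is the uniqueness of the join decomposition of $m$ together with the primitivity of the restricted functionals, and I expect that to be the main technical work.
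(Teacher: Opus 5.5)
Your proposal is correct and follows essentially the same route as the paper. You apply the join inequality $\codeg P\le\codeg F+\codeg G$ twice, with equality forced in both steps. For the Gorenstein part, you observe that the sum of relative interior lattice points of the dilates $(\codeg F_i)F_i$ is an interior lattice point of $(\codeg P)P$, hence its unique one. The facet functionals of $P$ coming from facets of $F_i$ then restrict to facet functionals of $F_i$ that take the value $1$ there.

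The primitivity you flag as the main technical work is settled in one line, as in the paper: an integral functional taking the value $1$ at the lattice point $a$ cannot be a proper multiple of another integral functional. The only differences are presentational. The paper works in the homogenized cones $C_P=C_{F_0}\oplus\cdots\oplus C_{F_l}$ and treats all $l+1$ factors at once, rather than reducing to two factors and inducting.
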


The proof can be found in Subsection~\ref{subsec:splitting}.

\begin{remark}
    The converse of the last statement of the previous proposition is wrong. This can be seen from the following example. Let 
    \[
P=
\operatorname{conv}\left\{
(-1,0,1),\,
(1,0,1),\,
(-1,-2,0),\,
(1,2,0)
\right\}.
\]It has the two edges
\[
F_0=
\operatorname{conv}\{(-1,0,1),(1,0,1)\}
\]and
\[
F_1=
\operatorname{conv}\{(-1,-2,0),(1,2,0)\}.
\]
One can directly verify that $P$ is a tetrahedron of codegree $2$ that is a join of $F_0$ and $F_1$, both being reflexive polytopes (of codegree $1$). So, $P$ is a degree join with two Gorenstein factors, however, $P$ is not Gorenstein as $2P$ has five interior lattice points, so it is not reflexive.
\end{remark}

The crucial property of Gorenstein polytopes is their duality \cite{BN08}. Each Gorenstein polytope $P$ has a dual Gorenstein polytope $P^\times$ of the same dimension, degree and codegree (index) that is combinatorially dual to $P$. We will recall this construction again in the next section. Moreover, $P$ is irreducible if and only if $P^\times$ is irreducible \cite{NS13}. 

\begin{definition}\label{def:vertex-spanning}
We call a lattice polytope $P \subset \MRR$ \emph{vertex-spanning} if every lattice point in its affine hull is an affine integer combination of its vertices. Equivalently, assuming $P$ is full-dimensional,
\[
  \sum_{v\in\vertices(P)}\Z(v,1)= M \oplus_\Z \Z.
\]
\end{definition}

The following splitting lemma will be essential for getting free joins in the main decomposition theorem.

\begin{lemma}
Let $P$ be a Gorenstein polytope that is a degree join of faces $F_0, \ldots, F_l$ (for $l > 0$). If $F_0$ has a vertex-spanning dual Gorenstein polytope $F_0^\times$, then $P$ is a free join of $F_0$ with $G := \conv(F_1, \ldots, F_l)$. Here, $G$ is itself a Gorenstein polytope that is a degree join of $F_1, \ldots, F_l$.
    \label{splitting}
\end{lemma}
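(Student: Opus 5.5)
By Proposition~\ref{associative}, $P$ is a degree join of $F_0$ and $G:=\conv(F_1,\ldots,F_l)$, and $G$ is a degree join of $F_1,\ldots,F_l$. The same proposition shows that $F_0$ and $G$ are Gorenstein polytopes. So the last sentence of the lemma is immediate. It also reduces everything to the case $l=1$: a Gorenstein polytope $P$ that is a degree join of two Gorenstein faces $F:=F_0$ and $G$, where $F^\times$ is vertex-spanning. The task is to show that this join is free, i.e. that the lattice $M$ (affinely) splits compatibly with the join.

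I would work in the cone picture. Let $C_P\subset M_\R\oplus\R$ be the cone over $P\times\{1\}$, and let $C_F,C_G$ be the cones over the two faces. These are faces of $C_P$ with $C_P=C_F+C_G$, and since the join condition gives $\dim C_P=\dim C_F+\dim C_G$, the linear spans satisfy $\lin C_F\oplus\lin C_G=M_\R\oplus\R$. Write $\overline M:=M\oplus\Z$. The join is free exactly when $\overline M=(\overline M\cap\lin C_F)\oplus(\overline M\cap\lin C_G)$. Equivalently, the lattice projection $\pi\colon \overline M\to\overline M/(\overline M\cap\lin C_G)$ restricted to $\overline M\cap\lin C_F$ should be surjective. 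Dually, this means that the restriction map $\overline N\to \overline N_F:=\operatorname{Hom}(\overline M\cap \lin C_F,\Z)$, taken on functionals vanishing on $\lin C_G$, is onto.

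The Gorenstein data supply the needed functionals. Let $n_P\in\overline N$ be the Gorenstein point of $P$: $\langle n_P,\cdot\rangle$ is the degree-$r$ height, and $C_P^\vee\cap\overline N$ is generated from the vertices of $P^\times$, placed at height $1$ with respect to a dual Gorenstein point. Each facet of $C_P$ is either the sum of a facet of $C_F$ with $C_G$, or of $C_F$ with a facet of $C_G$, and the primitive facet normals of $C_P$ are the vertices of $P^\times$. So the facets of the first kind yield functionals $u\in\overline N$ with $u|_{C_G}=0$ and $u|_{\lin C_F}$ a nonnegative functional defining a facet of $C_F$.

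The key claim is that $u|_{\lin C_F}$ is exactly the primitive normal of that facet of $F$, i.e. a vertex of $F^\times$ at the correct height. This is where the degree condition $\codeg P=\codeg F+\codeg G$ enters. Decompose $n_P=n_F+n_G$ with $n_F$ vanishing on $\lin C_G$ and $n_G$ vanishing on $\lin C_F$. The reflexivity condition $\langle u,m_P\rangle=1$ (for the interior point $m_P$ of $rP$), together with the codegree additivity, should force $m_P=m_F+m_G$ with $m_F,m_G$ the interior points of $\codeg(F)F$ and $\codeg(G)G$. Then $\langle u,m_F\rangle=1$, so $u|_{\lin C_F}$ is primitive and has lattice distance one, and it is therefore the vertex of $F^\times$ attached to that facet.

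Once the claim holds, the restrictions $u|_{\lin C_F}$ run over all vertices of $F^\times$ (lifted). Vertex-spanning of $F^\times$ says these vertices generate $\overline N_F$ (the dual of $\overline M\cap\lin C_F$). All of them are restrictions of elements of $\overline N$ vanishing on $\lin C_G$, so the restriction map is surjective. Dualizing, $\overline M\cap\lin C_F$ is a direct summand complementary to $\overline M\cap \lin C_G$, which is the freeness of the join.

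The main obstacle is the claim above: showing that codegree additivity forces the interior point to split as $m_P=m_F+m_G$, and that the normals of $P$ restrict exactly to the dual vertices of $F$ at height one, not to multiples. I would attempt it by counting heights: the point $m_P$ lies in the interior of $C_P=C_F+C_G$, so it decomposes as a sum of interior points of the two subcones. Their heights are at least $\codeg F$ and $\codeg G$, and by additivity these bounds must be attained. Uniqueness of interior points at the Gorenstein heights then identifies the summands with $m_F$ and $m_G$.
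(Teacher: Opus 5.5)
Your overall strategy is the paper's: reduce to two factors via Proposition~\ref{associative}, and restrict the primitive facet normals of $C_P$ that vanish on $C_G$ to $\lin C_F$. Then identify these restrictions with the vertices of $F^\times$, use vertex-spanning to get $\overline N\cap\lin(C_G)^\perp\cong\overline N_F$, and dualize to obtain $\overline M=(\overline M\cap\lin C_F)\oplus(\overline M\cap\lin C_G)$. All of those steps are fine.

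The gap is in your proposed proof of the key claim $m_P=m_F+m_G$. You write $m_P=x_F+x_G$ with $x_F\in\relint C_F$ and $x_G\in\relint C_G$. You then assert that the heights of $x_F$ and $x_G$ are at least $\codeg F$ and $\codeg G$.

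This decomposition is only the real one coming from $\lin C_F\oplus\lin C_G$. Nothing tells you that $x_F$ and $x_G$ are lattice points. Integrality of such components is precisely the lattice splitting you are trying to prove, so you cannot assume it. The codegree lower bound on heights applies only to lattice points; real relative-interior points can have arbitrarily small positive height. So as written, this step does not go through.

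The fix is to argue in the opposite direction, as the paper does in the proof of Proposition~\ref{associative}:
\begin{enumerate}
\item By definition of the codegree, there are lattice points $m_F\in\relint C_F$ and $m_G\in\relint C_G$ at heights $\codeg F$ and $\codeg G$.
\item Their sum is a lattice point of $\relint C_P=\relint C_F+\relint C_G$ at height $\codeg F+\codeg G=\codeg P$. This is where the degree-join condition is used.
\item Since $P$ is Gorenstein, the interior lattice point of $(\codeg P)P$ is unique, so $m_F+m_G=m_P$.
\end{enumerate}
Only uniqueness for $P$ is needed, not for $F$ or $G$. With $m_P=m_F+m_G$ in hand, you get $\langle u,m_F\rangle=\langle u,m_P\rangle=1$. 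Hence $u|_{\lin C_F}$ is primitive in $\overline N_F$, and the rest of your argument goes through unchanged.
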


Also this proof will be given in Subsection~\ref{subsec:splitting}.

As a nice consequence we get the following statement generalizing \cite[Corollary~5.16]{borger_thin_2023}.

\begin{lemma}
    If a Gorenstein polytope $P$ is a degree join, where one factor is a lattice pyramid (e.g., has degree $0$), then $P$ is a lattice pyramid.
    \label{pyramid}
\end{lemma}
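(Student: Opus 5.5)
The plan is to reduce to the splitting lemma (Lemma~\ref{splitting}) by first isolating a single lattice point as a degree-join factor, and then observing that the dual of a lattice point is trivially vertex-spanning.

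\textbf{Reduction to a lattice-point factor.} Let $P$ be a degree join of faces $F_0,\ldots,F_l$ with $l>0$, and, after relabelling, suppose $F_0$ is a lattice pyramid. If $\dim F_0=0$, then $F_0$ is already a lattice point. Otherwise $F_0=\conv(v,F')$, where $F'$ is a facet of $F_0$ and $v$ is a vertex of lattice distance one from $F'$. I then want to show that $F_0$ is itself a degree join of the faces $\{v\}$ and $F'$.
\begin{itemize}
\item It is a join for dimension reasons: $\dim F_0=\dim F'+0+1$.
\item For the codegree, a lattice pyramid satisfies $\codeg(F_0)=\codeg(F')+1=\codeg(F')+\codeg(\{v\})$. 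This is the standard fact that interior lattice points of $kF_0$ are exactly the points at height $1,\ldots,k-1$ over interiors of dilates of $F'$. Equivalently, $\deg$ is unchanged under pyramids.
\end{itemize}
Degree joins are associative by Proposition~\ref{associative}. Applying it twice (first to expand $F_0$, then to regroup), I get that $P$ is a degree join of the faces $\{v\},F',F_1,\ldots,F_l$. Reordering, this gives a degree join with $\{v\}$ as the distinguished factor and $l+1>0$ further factors.

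\textbf{Applying the splitting lemma.} Since $P$ is Gorenstein, Proposition~\ref{associative} shows that all factors are Gorenstein, in particular the point $\{v\}$, which has index $1$. Its dual Gorenstein polytope $\{v\}^\times$ is again a $0$-dimensional lattice polytope, i.e.\ a single lattice point. A point is vertex-spanning, since the only lattice point in its affine hull is the vertex itself. Lemma~\ref{splitting} then yields that $P$ is a free join of $\{v\}$ and $G:=\conv(F',F_1,\ldots,F_l)$. A free join with a lattice point is by definition a lattice pyramid, which proves the claim.

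\textbf{Main obstacle.} The delicate step is justifying the application of Lemma~\ref{splitting} to a $0$-dimensional factor. This requires checking two things:
\begin{itemize}
\item the duality construction and the notion of vertex-spanning behave as expected in dimension $0$;
\item the splitting lemma's conclusion ``free join'' is compatible with the convention that a lattice point is Gorenstein and reducible.
\end{itemize}
If the degenerate case causes trouble, I would fall back on a direct argument. A degree join with factor $\{v\}$ forces $\codeg(P)=\codeg(G)+1$. Using the reflexive dilation $rP$, the facet of $P$ containing $G$ (which exists because $P$ is a join) then has lattice distance exactly one from $v$. This is precisely the pyramid condition.
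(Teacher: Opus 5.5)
Your proposal is correct and is essentially the paper's proof: you split off the apex of the pyramid factor as a lattice-point factor via Proposition~\ref{associative}, note that the dual of a lattice point is a lattice point and hence vertex-spanning, and conclude with Lemma~\ref{splitting}. The degenerate case you flag causes no trouble, since the paper applies Lemma~\ref{splitting} to the point factor directly. Your fallback is also valid: the canonical point of $C_P$ equals $(v,1)$ plus a relative interior lattice point of $C_G$ at height $r-1$, so the primitive normal of the facet $C_G$ takes the value $1$ on $(v,1)$.
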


\begin{proof}
    Let $P$ be a degree join of faces $F$ and $F_2$, where $F$ is a lattice pyramid. If $F$ is a lattice point, then Lemma~\ref{splitting} yields that $P$ is a free join of $F$ and $F_2$, so a lattice pyramid. Otherwise, $F$ is a free join of a lattice point $F_0$ and a face $F_1$. By Proposition~\ref{associative} $P$ is a degree join of $F_0$ and $G$, where $G := \conv(F_1,F_2)$. As $F^\times_0$ also is a lattice point, and in particular vertex-spanning, Lemma~\ref{splitting} implies that $P$ is a free join of $F_0$ and $G$, and thus also a lattice pyramid.
\end{proof}

We remark that this statement needs that $P$ is Gorenstein. For instance, the triangle $\conv((0,0),(2,0),(0,2))$ is a degree join of $\conv((0,0),(2,0))$ and $\{(0,2)\}$, however not a lattice pyramid.

\subsection{Centrally thin Gorenstein polytopes}

In our main decomposition theorem, the following special class of Gorenstein polytopes plays a crucial role.

\begin{definition}
    A Gorenstein polytope $P$ of degree $s$ and dimension $d=2s$ is called {\em centrally thin} if $P$ is irreducible when $d \ge 1$.
\end{definition}

By convention, lattice points are centrally thin. Let us remark that lattice polytopes of dimension $d$ and degree $s$ with $d \ge 2s$ are called {\em trivially thin} in \cite{borger_thin_2023}, as they are automatically thin. Moreover, by Theorem~\ref{cayley} any centrally thin Gorenstein polytope of positive dimension is a Cayley polytope.

\begin{remark}
    Let $P$ be centrally thin of dimension $d$ and degree $s$. If $s=0$, then $d=0$ and $P$ is a lattice point. If $s=1$, then $d=2$ and $P \cong [0,1]^2$ by the classification of thin polygons\footnote{Note that the exceptional triangle in the classification in  \cite{batyrev_multiples_2007} (the second dilate of the unimodular triangle) is not Gorenstein.}. If $s=2$, then $d=4$ and $P$ is isomorphic to one of the Gorenstein polytopes $Q_1,Q_2,Q_4,Q_5$ (where $Q_4$ is dual to $Q_1$ and $Q_5$ is dual\footnote{Note that the diagram of $Q_2$ in \cite{BJ10} is wrong. It should be $\conv(0,e_1,e_2)*\conv(0,e_1)*\conv(0,e_2)$.} to $Q_2$) in the classification of Gorenstein polytopes of degree $2$ by Batyrev and Juny \cite{BJ10}.
    \label{ct-classification}
\end{remark}

As being irreducible as well as dimension and degree of Gorenstein polytopes are invariant under duality, let us observe that the dual of a centrally thin Gorenstein polytope is again centrally thin.

The following result allows for an algorithmic computation of centrally thin Gorenstein polytopes via classifying Minkowski summands of reflexive polytopes.

\begin{proposition}
Let $P_0, \ldots, P_s$ be lattice polytopes in $\R^s$ (with $s > 0$) such that 
\begin{enumerate}
    \item all $P_0, \ldots, P_s$ are positive-dimensional,
    \item all $P_0, \ldots, P_s$ contain $0$,
    \item no proper subsum $\sum_{i \in I} P_i$ (for $\emptyset \not= I \subsetneq \{0, \ldots, s\}$) contains $0$ in its relative interior,
    \item $P_0 + \cdots + P_s$ is reflexive with $0$ in its interior.
\end{enumerate}
Then $P_0 * \cdots * P_s$ is a centrally thin Gorenstein polytope of degree $s$ (and dimension $2s$). Up to isomorphisms, any centrally thin Gorenstein polytope of degree $>0$ occurs in this way.\label{tuples}
\end{proposition}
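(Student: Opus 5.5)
Write $Q := P_0 \cayley \cdots \cayley P_s \subset \R^s \times \R^{s+1}$. The proof has two directions: first show that $Q$ is centrally thin of degree $s$, then show that every centrally thin $P$ arises this way.

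\textbf{Dimension, Gorenstein property, degree.} Condition (iv) makes $P_0+\cdots+P_s$ full-dimensional in $\R^s$. By \eqref{eq:cayley-dimension}, $\dim Q = s + s = 2s$. For the Gorenstein property I use the standard Cayley trick. The cone over $Q$ is $\{(x,\lambda) : \lambda \in \R_{\ge0}^{s+1},\ x \in \sum_i \lambda_i P_i\}$, and a lattice point lies in $\relint(kQ)$ exactly when all $\lambda_i>0$, $\sum\lambda_i=k$, and $x \in \relint(\sum_i \lambda_i P_i)$.

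\begin{itemize}
\item For $k=s+1$ with all $\lambda_i=1$, the point $(0,(1,\ldots,1))$ is interior, because $0 \in \operatorname{int}(\sum P_i)$ by (iv).
\item For $k\le s$ some $\lambda_i=0$, so $\relint(kQ)$ contains no lattice point.
\end{itemize}

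Hence $\codeg Q = s+1$ and $\deg Q = 2s+1-(s+1) = s$. Reflexivity of $(s+1)Q$ with respect to this interior point follows by checking lattice distance one to every facet. There are two kinds of facets:

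\begin{itemize}
\item facets $\lambda_i = 0$, where the distance is $\lambda_i = 1$;
\item facets coming from facets of the Minkowski sum $\sum \lambda_i P_i$ with primitive normal $u$, with linear form $\sum_i \lambda_i h_{P_i}(u) - \pro{u}{x}$ (here $h_{P_i}$ is the support function).
\end{itemize}

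At $x=0$, $\lambda=\mathbf 1$ the second form evaluates to $h_{\sum P_i}(u)$, which equals $1$ by reflexivity. The form is integral since the $P_i$ are lattice polytopes.

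\textbf{Irreducibility.} Suppose $Q$ is a Gorenstein join of faces $F$ and $G$. Since $\codeg F + \codeg G = s+1$ and the interior point of $(s+1)Q$ is $(0,\mathbf 1)$, the interior points of $(\codeg F)F$ and $(\codeg G)G$ must sum to it. I expect this to force $F$ and $G$ to be Cayley-type faces over complementary index sets $I$ and $I^c$, where $F$ has interior point $(0,\mathbf 1_I)$. That requires $0 \in \relint \sum_{i\in I} P_i$ with $\emptyset \ne I \subsetneq \{0,\ldots,s\}$, contradicting (iii). Condition (i) excludes degenerate faces such as a vertex of $P_i$ serving as a factor.

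The main obstacle is making rigorous that a join factor of $Q$ respects the Cayley structure, i.e.\ that each fiber $P_i \times e_i$ lies entirely in $F$ or in $G$. To get this I would use the dual characterization of Cayley joins from \cite{BN08} together with the uniqueness of the interior lattice point of $(s+1)Q$, which lies on the ray $(0,\mathbf 1)$.

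\textbf{Converse.} Let $P$ be centrally thin of degree $s>0$, so $d = 2s$. By \cref{cayley}, $P$ is a Cayley polytope $P_0 \cayley \cdots \cayley P_l$ with factors in dimension $m \le 2s-1$. Since $2s = m + l$ and $l+1 \le \codeg P = s+1$ (the point $\sum e_i$ direction needs $l+1\le$ codegree), we get $l \le s$ and $m \ge s$. I would refine this to $l = s$, $m = s$, choosing a Cayley decomposition with maximal $l$. Then:

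\begin{itemize}
\item Since $l+1 = \codeg P$, the unique interior point of $(s+1)P$ has all $\lambda_i = 1$. After translating each $P_i$ so that this point is $(0,\mathbf 1)$, conditions (ii) and (iv) follow, using reflexivity as in the facet computation above.
\item Condition (iii) follows from irreducibility: if $0\in\relint\sum_{i\in I}P_i$, then the Cayley faces over $I$ and $I^c$ give a Gorenstein join.
\item Condition (i) follows because a zero-dimensional $P_i$ makes $P$ a lattice pyramid, hence reducible.
\end{itemize}

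The delicate part of the converse is showing $l = s$. If $l < s$, I would argue that $\codeg P = s+1 > l+1$ lets one further split a factor, producing a longer Cayley decomposition and contradicting maximality of $l$.
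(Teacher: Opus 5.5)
Your forward computation of $\codeg Q=s+1$ and of the reflexivity of $(s+1)Q$ via the Cayley trick is correct; the paper instead cites \cite[Theorem~5.8]{NS13} for the whole forward direction. In the converse, your arguments for (i) and for (iii) (the latter is essentially \cref{lem:hollow}) are fine once an all-unit Cayley presentation with $s+1$ factors is available.

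The genuine gap is precisely that availability. Taking $l$ maximal and ``further splitting a factor because $\codeg P>l+1$'' is not an argument: nothing explains why a factor can be split, and irreducibility is never used. This step is the main technical result of the paper, \cref{irreducible}, proved as follows.
\begin{enumerate}
\item Start from a presentation $A\cayley B_1\cayley\cdots\cayley B_m$ of type $(a,1^{(m)})$; initially this is $(s,1)$ by \cref{lem}.
\item Project along $U=\lin(B_1+\cdots+B_m)$. Since projection along a Minkowski summand preserves reflexivity, the image $A'$ of $A$ is Gorenstein of codegree $a$ and degree $b=s-\dim U$.
\item Irreducibility, via \cref{lem:hollow}, makes all subsums of the $B_j$ hollow. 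Together with nonnegativity of the mixed degree this gives $m\le\dim U$, hence $b\le a-q<a$.
\item Apply \cref{lem} to $A'$ (when $b>0$) and pull back. This gives a presentation of $P$ of type $(b,1^{(r-b)})$; iterating drives all weights to $1$.
\end{enumerate}

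A second gap in the converse: (ii) and (iv) do not follow from translation alone. Translation can move the interior point $x$ of $\sum_iP_i$ to $0$. But requiring also $0\in P_i$ for every $i$ means finding lattice points $p_i\in P_i$ with $\sum_ip_i=x$, which is not automatic. For instance, $\conv((0,0),(1,1))+\conv((1,0),(0,1))$ is a reflexive square whose interior point $(1,1)$ is no such sum. The paper gets the $p_i$ by applying \cref{irreducible} also to the centrally thin dual $\dual P$. A Cayley decomposition of $\dual P$ into $s+1$ factors amounts to writing the canonical point $(x,\mathbf 1)$ of $C_P$ as a sum of $s+1$ nonzero lattice points of $C_P$. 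By heights these are lattice points of $P$, necessarily one from each fiber $P_i\times e_i$.

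In the forward direction, the step you flag as the main obstacle (irreducibility) can be closed elementarily, but it needs (ii), which your sketch never uses.
\begin{enumerate}
\item Suppose $Q$ is a degree join of faces $F$ and $G$. Choose lattice points $m_F\in\relint C_F$ and $m_G\in\relint C_G$ at heights $\codeg F$ and $\codeg G$; then $m_F+m_G=(0,\mathbf 1)$.
\item Their Cayley coordinate vectors are nonnegative integer vectors summing to $\mathbf 1$. Hence they equal $\mathbf 1_I$ and $\mathbf 1_{I^c}$ with $\emptyset\neq I\subsetneq\{0,\ldots,s\}$.
\item Since $m_F$ is a positive combination of all vertices of $F$, every vertex of $F$ lies over $I$, and every vertex of $G$ lies over $I^c$. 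So $F=\conv(P_i\times e_i:i\in I)$, and similarly for $G$. No dual characterization of Cayley joins is needed.
\end{enumerate}
However, your claim that $F$ has interior point $(0,\mathbf 1_I)$ is not automatic. A priori $m_F=(y,\mathbf 1_I)$ with $y$ some lattice point of $\relint\sum_{i\in I}P_i$, whereas (iii) only excludes $y=0$. By (ii), $\lin C_F=L_I\times\R^I$ with $L_I=\lin\sum_{i\in I}P_i$, and likewise for $G$. Since $y\in L_I$, $-y\in L_{I^c}$, and $\lin C_F\cap\lin C_G=0$ for a join, you get $y=0$, contradicting (iii).
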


The proof will be given in Subsection~\ref{subsec:centrally thin}.

\begin{remark}
    The condition on the tuple is precisely what it means to be a {\em proper centered irreducible nef-partition} in $\R^s$ of length $s+1$ \cite{batyrevborisov, BN08}. 
    \end{remark}

\begin{remark}
In fact, by Lemma~\ref{lem:hollow} condition (iii) can be replaced by condition
\begin{enumerate}
    \item[(iii)'] no proper subsum $\sum_{i \in I} P_i$ (for $\emptyset \not= I \subsetneq \{0, \ldots, s\}$) contains a lattice point in its relative interior
\end{enumerate}
In the notation of \cite{Nil20}, this means that $P_0, \ldots, P_s$ are families of mixed degree $0$ and length $s+1$. We refer to Section 2.2 in \cite{Nil20} for results and questions on such families of lattice polytopes. Let us give one example. If $P_0 = \conv(0,e_1)$, $P_1 = \conv(0,e_2)$ and $P_2=[-1,0]^2$, then $P_0,P_1,P_2$ satisfy the conditions of Proposition~\ref{tuples} and thus $P_0 * P_1 * P_2$ is a centrally thin Gorenstein polytope in dimension $4$, namely $Q_5$ in the classification in \cite{BJ10}.
\label{strengthen}
\end{remark}

Let us note one special property of centrally thin Gorenstein polytopes.

\begin{proposition}
     Let $P$ be a centrally thin Gorenstein polytope. Then $P$ and $P^\times$ are vertex-spanning.
\label{duality}
\end{proposition}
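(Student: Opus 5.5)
Since the dual of a centrally thin Gorenstein polytope is again centrally thin (as observed above), it suffices to prove that every centrally thin $P$ is vertex-spanning; applying this to $P^\times$ then gives the second claim. If $P$ is a lattice point there is nothing to show. So assume $\deg P=s>0$. By Proposition~\ref{tuples} we may assume $P=P_0\cayley\cdots\cayley P_s$, where the tuple $P_0,\ldots,P_s\subset\R^s$ satisfies (i)--(iv) and $Q:=P_0+\cdots+P_s$ is reflexive with $0$ in its interior.

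\emph{Step 1: translate vertex-spanning into a statement about the $P_i$.} The affine hull of $P$ is $\R^s\times\{\sum_i c_i=1\}$. The vertices of $P$ are the points $(v,e_i)$ with $v\in\vertices(P_i)$. For each $i$ fix a vertex $w_i$ of $P_i$. The affine lattice generated by $\vertices(P)$ then contains all $(w_i,e_i)$, so the affine integer combinations of the $e_i$ are reached in the second factor. Hence $P$ is vertex-spanning if and only if the lattice
\[
L:=\sum_i\sum_{v\in\vertices(P_i)}\Z(v-w_i)\;+\;\sum_{i,j}\Z(w_i-w_j)
\]
equals $\Z^s$. Up to choosing representatives, this says that the lattice generated by the vertices of all the $P_i$, taken modulo the appropriate translates, is all of $\Z^s$. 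I would simplify this further using the vertices of $Q$. Each vertex of $Q$ is a sum $\sum_i v_i$ of vertices $v_i\in\vertices(P_i)$, and the differences of such sums lie in $L$. So it is enough to show that the affine lattice spanned by the vertices of the $P_i$ is not a proper sublattice.

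\emph{Step 2: show $Q$ stays reflexive in the coarser lattice.} Suppose, for a contradiction, that $L'\subsetneq\Z^s$ is the lattice generated as above, and assume $0\in L'$ after translating each $P_i$ by a lattice point. I would show that the tuple is still a lattice tuple with respect to $L'$ and that (i)--(iv) still hold there. Conditions (i)--(iii) are clear. For (iv), let $u\in(\Z^s)^*$ be a facet normal of $Q$ with $\min_Q u=-1$, and write $u=ku'$ with $u'$ primitive in $(L')^*\supseteq(\Z^s)^*$. Then $\min_Q u'=-1/k$. This minimum is attained at a vertex of $Q$, which lies in $L'$, so it is an integer; hence $k=1$. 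Thus $Q$ is reflexive with respect to $L'$ as well. By Proposition~\ref{tuples}, $P$ is then also a centrally thin Gorenstein polytope of degree $s$ with respect to the coarser lattice $L'\oplus\Z^{s+1}$.

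\emph{Step 3 (main obstacle): derive a contradiction from the index $[\Z^s:L']>1$.} Normalized volume scales by this index, so $\operatorname{Vol}_{L'}(P)=[\Z^s:L']^{-1}\operatorname{Vol}_{\Z}(P)$. Both are equal to $h^*(1)$ of a Gorenstein polytope of dimension $2s$ and degree $s$, with respect to the respective lattice. I would try to pin down $h^*(1)$, or at least show it cannot drop, via thinness. Centrally thin polytopes are trivially thin, and the mixed degree is nonnegative. Together these should force the $h^*$-polynomial to be determined by the Cayley factors through their mixed volumes, $h^*(1)=\sum_{I}\MV(\ldots)$-type expressions coming from \cite{Nil20}. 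That would give an intrinsic quantity that is invariant under passing to $L'$, contradicting the volume drop.

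If this volume route fails, I would instead exploit irreducibility directly. For each facet normal $u$ of $Q$, exactly one $P_i$ has $\min_{P_i}u=-1$ and all others have minimum $0$, since the minima are nonpositive integers summing to $-1$. A proper sublattice $L'$ would allow a nonprimitive functional $u\in(L')^*\setminus(\Z^s)^*$ taking integer values on all $\vertices(P_i)$. I would then try to use such a functional to split the index set $\{0,\ldots,s\}$ into parts whose partial sums contain $0$ in their relative interior, contradicting condition (iii).
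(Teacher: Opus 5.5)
Step~1's reduction is wrong in the ``if'' direction. Write a difference of vertices as $(v,e_i)-(v',e_j)=(v-v',e_i-e_j)$. The cross terms $(w_i-w_j,e_i-e_j)$ always carry a nonzero $\Z^{s+1}$-component. The part of the difference lattice with vanishing $\Z^{s+1}$-component is exactly $\sum_i\Z D_i$, where $D_i$ is the set of differences of vertices of the \emph{single} factor $P_i$. So $P$ is vertex-spanning if and only if $\sum_i\Z D_i=\Z^s$. Your $L$, like the affine lattice spanned by the vertices of all $P_i$ together, can be strictly larger. For example, $[0,2]\cayley[1,3]$ has $L=\Z$, but $(1,e_0)$ is not an affine integer combination of its vertices. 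With the correct lattice, Step~2 breaks down as well. After translating each $P_i$ so that its vertices lie in $\sum_i\Z D_i$, nothing guarantees that the interior lattice point of $Q$ lies in this sublattice. For a general reflexive polytope it need not lie in the affine lattice spanned by the vertices, e.g.\ $\conv((-1,-1),(2,-1),(-1,2))$. So you cannot conclude that $Q$ stays reflexive there.

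More importantly, Step~3 carries the whole content but is not an argument, and its mechanism cannot work. Normalized volumes and normalized mixed volumes are \emph{both} divided by $[\Z^s:L']$, so there is no ``intrinsic'' invariant quantity to compare. Nor is the volume of a Gorenstein polytope of given dimension and degree fixed. What is missing is a rigidity statement:
\begin{itemize}
\item By (iii)' in Remark~\ref{strengthen} (i.e.\ Lemma~\ref{lem:hollow}), every nonempty subsum of $P_1,\ldots,P_s$ is hollow, so $\mcodeg(P_1,\ldots,P_s)=s+1$.
\item Nonnegativity of the mixed degree then forces $\mdeg(P_1,\ldots,P_s)=0$.
\item \cite[Theorem~5]{Nil20} together with (i) gives $\MV(P_1,\ldots,P_s)=1$.
\end{itemize}
The paper concludes directly from this. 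Positivity of the mixed volume gives linearly independent $w_i=y_i-x_i\in D_i$. Monotonicity gives $1\le|\det(w_1,\ldots,w_s)|=\MV([x_1,y_1],\ldots,[x_s,y_s])\le 1$, so $D_1\cup\cdots\cup D_s$ contains a lattice basis of $\Z^s$. Then $(0,e_i)=(z_i,e_i)-(z_i,0)$ finishes the proof.

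Your sublattice idea could also be completed once $\MV=1$ is known. Relative to $L_0=\sum_{i\ge1}\Z D_i$, the translates $P_i-x_i$ are lattice polytopes whose normalized mixed volume $1/[\Z^s:L_0]$ must be a positive integer, which forces $L_0=\Z^s$. But this makes Step~2 superfluous and still depends on the ingredient you did not supply. Your fallback via facet normals and condition (iii) is too vague to assess.
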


Also this proof will be given in Subsection~\ref{subsec:centrally thin}.

\subsection{The decomposition theorem and its applications}

The number $2s-d-1$ of a Gorenstein polytope is called its {\em Calabi-Yau dimension}. In this paper, we are interested in Gorenstein polytopes of negative Calabi-Yau dimension (or in the notation of \cite{borger_thin_2023} trivially thin Gorenstein polytopes). Therefore, its negative is more convenient to consider.

\begin{definition}
Let us write $q := d+1-2s$, and call it the {\em negative CY-dimension}.
\end{definition}

Note that if $r$ is the codegree of $P$, we have $q=r-s$. In particular, just as degree and codegree, the negative CY-dimension is additive for degree joins, and it also stays the same under the duality of Gorenstein polytopes.

Notice that $q \ge 1$ if and only if $d \ge 2s$. In particular, an irreducible Gorenstein polytope is centrally thin if and only if $q=1$.

With this preparation, we can formulate our main result.

\begin{theorem}
    Let $P$ be a Gorenstein polytope of dimension $d$ and negative CY-dimension~$q$. If $q \ge 1$, then
    \begin{enumerate}
        \item $P$ is centrally thin (and $q=1$), or $P$ is a free join of $q \ge 2$ centrally thin Gorenstein polytopes, or
        \item $P$ is a free join of $q$ centrally thin Gorenstein polytopes and one Gorenstein polytope of zero CY-dimension.
    \end{enumerate}
    In any case, $q=1$ or $P$ is a free join of $q \ge 2$ Gorenstein polytopes of negative CY-dimension one.
    \label{main}
\end{theorem}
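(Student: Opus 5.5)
We argue by induction on $\dim P$, combining the Cayley decomposition of Theorem~\ref{cayley} with the splitting Lemma~\ref{splitting}, Proposition~\ref{associative} and Proposition~\ref{duality}.

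\textbf{Key claim.} The heart of the argument is the following.
\begin{quote}
\emph{If $q\ge 2$, or if $q=1$ and $P$ is reducible, then $P$ is a degree join of faces $F_0,\ldots,F_l$ with $l\ge1$ and all $q(F_i)\ge 0$.}
\end{quote}
Granting this, the induction runs as follows.
\begin{itemize}
\item Since $q$ is additive under degree joins, and each $F_i$ is Gorenstein by Proposition~\ref{associative}, the factors have nonnegative negative CY-dimensions summing to $q$.
\item By induction each factor with $q(F_i)\ge1$ is centrally thin or a free join of centrally thin polytopes (possibly together with one polytope of CY-dimension zero). Using associativity, we refine the degree join so that every factor with $q\ge1$ is itself centrally thin.
\item For a centrally thin factor $F_0$, the dual $F_0^\times$ is vertex-spanning by Proposition~\ref{duality}. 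Lemma~\ref{splitting} then splits it off as a free join factor, and the remainder $G$ is a Gorenstein degree join of the other factors.
\item Iterating peels off all centrally thin factors. What remains has $q=0$; its Gorenstein pieces of CY-dimension zero form a single Gorenstein polytope $\conv$ of those pieces.
\end{itemize}
This gives (i) or (ii): at most one CY-dimension-zero factor appears, and there are exactly $q$ centrally thin ones. One issue needs care. If in (ii) all $q$ centrally thin factors are lattice points (degree $0$), we must still absorb correctly. Since a free join with a lattice point is a centrally thin factor of negative CY-dimension one, the final sentence follows by merging the zero-CY factor with one centrally thin factor. The free join of a $q=1$ and a $q=0$ Gorenstein polytope has $q=1$.

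\textbf{Producing the degree join (main obstacle).} If $q=1$ and $P$ is reducible, a Gorenstein join is already a degree join, and $q$ splits as $1=1+0$ with nonnegative parts. This requires showing that factors of a Gorenstein join have $q\ge0$, which follows from $\dim F\ge 2\deg F-1$ for Gorenstein faces, via Cayley/degree arguments. For $q\ge2$ we have $d\ge 2s+1$, so Theorem~\ref{cayley} gives a Cayley decomposition $P=P_0\cayley\cdots\cayley P_l$ with $\dim(P_0+\cdots+P_l)\le 2s-1$. Hence the length satisfies $l\ge d-2s+1=q\ge 2$, i.e.\ the decomposition is long.

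The plan is to show that a long Cayley decomposition of a Gorenstein polytope forces a degree join. Dually, a Cayley decomposition of $P$ corresponds to a splitting of the degree-$r$ functional of $P^\times$ into a sum of lattice points; compare the Batyrev--Nill dual characterization. Nonnegativity of the mixed degree gives
\[
\codeg(P)\ \ge\ \sum \mcodeg
\]
of appropriate subfamilies of $P_0,\ldots,P_l$. Taking a minimal subfamily $I$ whose Minkowski sum has an interior point at the right height, one gets faces $F=\conv(P_i\times e_i: i\in I)$ and $F'=\conv(P_j\times e_j: j\notin I)$. Codegree additivity $\codeg P=\codeg F+\codeg F'$ then follows from counting the interior point of $rP$ against these heights.

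The step I expect to be hardest is proving that $P$ is genuinely a \emph{join}, i.e.\ $\dim P=\dim F+\dim F'+1$, rather than merely satisfying codegree additivity. I would first try to obtain this by applying Theorem~\ref{cayley} successively, first to $P$ and then to its dual, to force the affine spans of $F$ and $F'$ to be independent. Failing that, I would use that the dimension bound $\le 2s-1$ combined with the mixed-degree inequality is tight only when the sums over $I$ and its complement lie in complementary subspaces.
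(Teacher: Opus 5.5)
Your reduction (once $P$ is known to be a degree join, split off centrally thin factors with Proposition~\ref{duality}, Lemma~\ref{splitting} and Proposition~\ref{associative}) is essentially the second half of the paper's proof. \textbf{The gap is your key claim for $q\ge 2$,} namely that such $P$ is reducible. This is the paper's Corollary~\ref{irreducible} and the main new content, and your sketch does not reach it.

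The only tool you name for finding a proper subfamily with non-hollow Minkowski sum is nonnegativity of the mixed degree, applied to the Cayley decomposition of Theorem~\ref{cayley}. There the expanded family has $r=s+q$ members, but its Minkowski sum has dimension $n\le 2s-1$. So nonnegativity only yields $\mcodeg\le n+1\le 2s$, which is less than $r$ only when $q\ge s+1$, i.e.\ $d\ge 3s$. Made precise, your argument therefore recovers the Batyrev--Juny regime of Theorem~\ref{bj}, not the range $2\le q\le s$.

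Length alone does not help either: centrally thin polytopes of degree $s\ge 2$ are irreducible yet have Cayley decompositions into $s+1\ge 3$ factors.

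The missing idea is the Cayley descent of Lemma~\ref{lem:descent}. Start from a presentation $A\cayley B_1\cayley\cdots\cayley B_m$ of type $(a,1^{(m)})$, initially $(s,1^{(q)})$ by Lemma~\ref{lem}, and project along $U=\lin(B_1+\cdots+B_m)$. Projection along a Minkowski summand preserves reflexivity \cite[Corollary~6.7]{BN08}, so the image $A'$ is Gorenstein of codegree $a$ and degree $s-\dim U$. Applying Lemma~\ref{lem} to $A'$ and lifting gives a presentation of $P$ of type $(b,1^{(r-b)})$ with $b\le a-q$. This bound uses $m\le \dim U$, which comes from the mixed degree and the hollowness of the $B_j$-subsums. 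Iterating produces an all-ones presentation with $r$ factors whose Minkowski sum has dimension exactly $s$. Only at that point do Lemma~\ref{lem:hollow} and the mixed degree give $r\le s+1$, i.e.\ $q\le 1$ for irreducible $P$.

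By contrast, the step you flag as hardest is routine once a proper non-hollow subsum exists. By \cite[Propositions~6.11 and~6.13]{BN08} the two complementary subsums are reflexive in complementary subspaces, and each multiplicity block lies on one side. Proposition~\ref{prop:weighted-cayley} then gives the codegrees, so $P$ is a Gorenstein join (Lemma~\ref{lem:hollow}).

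Your claim that factors of a Gorenstein join have $q\ge 0$ because $\dim F\ge 2\deg F-1$ is false. A reflexive polygon has $q=-1$. The free join of $[0,1]^2$, $[0,1]^2$ and a reflexive polygon has $q=1$, yet it is a Gorenstein join of factors with $q=2$ and $q=-1$. You do not need this claim. Decompose $P$ completely into irreducible factors; each has $q_i\le 1$ by Corollary~\ref{irreducible}, so some $q_i=1$. Split that factor off and repeat while the remainder still has $q\ge 1$.
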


In order to appreciate this result, let us prove all theorems presented in the introduction. Throughout, let $P$ be a Gorenstein polytope of dimension $d$ and degree $s$.

\begin{proof}[Proof of Theorem~\ref{index}]

The index condition translates to $d > 2s$ or equivalently $q \ge 2$. Theorem~\ref{main} implies that $P$ is a free join of at least two Gorenstein polytopes. 

\end{proof}

Let us also give a quick new proof of the Batyrev-Juny conjecture.

\begin{proof}[Proof of Theorem~\ref{bj}]

We may assume $s>0$. The condition $d \ge 3s$ translates to $q \ge s+1$. Theorem~\ref{main} implies that $P$ is a free join of at least $s+1$ Gorenstein polytopes. As the degrees of the factors must add up to $s$, we see that at least one of the factors must have degree zero. Lemma~\ref{pyramid} yields that $P$ is a lattice pyramid.

\end{proof}

Let us prove the generalization of the characterization in the Batyrev-Juny conjecture.

\begin{proof}[Proof of Corollary~\ref{close}]

We prove the statement by induction on the dimension. We may assume $s > k$. So, we have $d=3s-k>2s$ and the degree version of Theorem~\ref{index} implies that $P$ is a free join of Gorenstein polytopes $F$ and $G$ with dimensions $d_F$ and $d_G$, as well as degrees $s_F$ and $s_G$. We have $s=s_F+s_G$. As $P$ is not a lattice pyramid, this is also true for $F$ and $G$. Thus the Batyrev-Juny conjecture implies $d_F=3s_F-k_F$ and $d_G=3s_G-k_G$ for $k_F,k_G \ge 1$. Hence, $d_F+d_G+1=d=3s-k=3s_F+3s_G-k=d_F+k_F+d_G+k_G-k$, therefore
\[k_F+k_G=k+1.\]
This implies $k_F,k_G \le k$. Applying induction to $F$ and $G$ yields the result.
\end{proof}

For the applications to thin polytopes and the stringy $E$-polynomials, we need a new characterization of thin Gorenstein polytopes extending the one in \cite{borger_thin_2023}. The fact that we get free joins and not just Gorenstein joins in this result solves a problem that had to be left open in \cite{borger_thin_2023}. In the proof we use the novel notion of a degree join and the special properties of centrally thin Gorenstein polytopes.

\begin{theorem}
    Let $P$ be a Gorenstein polytope of dimension $>0$. Then $P$ is thin if and only if $P$ is centrally thin or $P$ is a free join of two Gorenstein polytopes with at least one factor centrally thin.
    \label{thin-char}
\end{theorem}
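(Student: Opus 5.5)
We rely on the characterization of thin Gorenstein polytopes from \cite[Theorem~6.3]{borger_thin_2023} and upgrade it using Theorem~\ref{main} and Lemma~\ref{splitting}. As we recall it, that result says: a Gorenstein polytope $P$ of positive dimension is thin if and only if either $P$ is trivially thin ($d \ge 2s$, i.e.\ $q \ge 1$), or $P$ is a Gorenstein join of faces $F$ and $G$ with $F$ trivially thin. The two directions are then handled separately.

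For ``if'' we argue as follows. A centrally thin $P$ has $d=2s$, so $q=1$ and $P$ is trivially thin, hence thin. If $P$ is a free join of $F$ and $G$ with $F$ centrally thin, then $P$ is a Gorenstein join of these faces. Indeed, free joins are Gorenstein joins, as recalled after the definition. Moreover $F$ is trivially thin, so the cited characterization gives thinness. Alternatively, one can use the multiplicativity of local $h^*$ under free joins together with $\ell^*(F)=0$.

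For ``only if'' let $P$ be thin. \emph{Case $q \ge 1$:} Theorem~\ref{main} says $P$ is centrally thin with $q=1$, or $P$ is a free join of at least two factors at least one of which is centrally thin. In the latter case, group one centrally thin factor $F_0$ against the free join $G$ of the remaining factors. Free joins are associative, so $P=F_0\circ_\Z G$, and $G$ is Gorenstein as a free join of Gorenstein polytopes. \emph{Case $q \le 0$:} the characterization gives a Gorenstein join, hence a degree join, of faces $F,G$ with $F$ trivially thin, so $q_F \ge 1$. Apply Theorem~\ref{main} to $F$. Either $F$ is centrally thin, or $F$ is a free join with some centrally thin factor $F_0$; write $F=F_0\circ F_1$. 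By Proposition~\ref{associative}, $P$ is a degree join of $F_0$ and $G':=\conv(F_1,G)$, and this face is Gorenstein. By Proposition~\ref{duality}, $F_0^\times$ is vertex-spanning. Lemma~\ref{splitting} therefore yields that $P$ is a free join of $F_0$ and $G'$, with $G'$ Gorenstein, as required. If $F_0$ is a lattice point, it is centrally thin by convention. The statement's requirement that the centrally thin factor be a free factor is then still met, though one may prefer to say $P$ is a lattice pyramid.

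The main obstacle is making sure the input from \cite{borger_thin_2023} is exactly as recalled. The worry is that it might only provide a chain of nested Gorenstein joins rather than a single decomposition, since Gorenstein joins are not associative. If the statement there is iterative, we would first convert every Gorenstein join into a degree join; degree joins are associative by Proposition~\ref{associative}. We would then collapse the chain into one degree join in which a trivially thin face appears as a factor, and run the splitting argument above once. Everything else is a direct combination of Theorem~\ref{main}, Proposition~\ref{duality} and Lemma~\ref{splitting}.
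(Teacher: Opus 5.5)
Your proposal is correct and follows essentially the paper's argument. The paper likewise uses \cite[Theorem~6.3]{borger_thin_2023}, applies Theorem~\ref{main} to the trivially thin part, regroups via Proposition~\ref{associative}, and splits off the centrally thin factor using Proposition~\ref{duality} and Lemma~\ref{splitting}, with the reverse direction by multiplicativity of the local $h^*$-polynomial under free joins. The only difference is cosmetic: when $q\ge 1$ you take the free join directly from Theorem~\ref{main}, whereas the paper passes both cases through the splitting lemma.
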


\begin{proof}

As centrally thin polytopes are (trivially) thin and the local $h^*$-polynomial is multiplicative with respect to free joins, the reverse implication is clear. So, let $P$ be thin. By \cite[Theorem~6.3]{borger_thin_2023} (a) $P$ is trivially thin or (b) $P$ is a Gorenstein join of two Gorenstein polytopes $F$ and $G$ with $F$ being trivially thin. Let us recall that trivially thin is equivalent to having negative CY-dimension, so Theorem~\ref{main} implies that (a) $P$ or (b) $F$ is centrally thin or a free join of two Gorenstein polytopes $F'$ and $G'$ with $F'$ being centrally thin. Assuming that $P$ is not centrally thin itself, we see from Proposition~\ref{associative} that in both cases $P$ is a degree join of two or three Gorenstein polytopes, where the first factor is a centrally thin Gorenstein polytope. By Proposition~\ref{duality}, Lemma~\ref{splitting} yields that $P$ is a free join of a centrally thin polytope with a Gorenstein polytope.
\end{proof}

By the characterization Proposition~\ref{tuples}, this allows for an effective computational approach towards classifying thin Gorenstein polytopes.

We get a new consequence from the fact that centrally thin Gorenstein polytopes are even-dimensional.

\begin{corollary}
    Odd-dimensional thin Gorenstein polytopes are free joins.
\end{corollary}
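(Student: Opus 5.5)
We deduce the corollary directly from Theorem~\ref{thin-char}, using that centrally thin polytopes have even dimension. Let $P$ be a thin Gorenstein polytope of odd dimension $d$. In particular $d \ge 1 > 0$, so Theorem~\ref{thin-char} applies. It says that either $P$ is centrally thin, or $P$ is a free join of two Gorenstein polytopes one of which is centrally thin. The corollary then amounts to ruling out the first alternative.

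The key step is therefore the parity observation. By definition, a centrally thin Gorenstein polytope of degree $s$ has dimension $d = 2s$. This also holds for lattice points, since then $d = 0 = 2\cdot 0$. Hence every centrally thin Gorenstein polytope is even-dimensional. Since $d$ is odd, $P$ is not centrally thin, and Theorem~\ref{thin-char} forces $P$ to be a free join of two Gorenstein polytopes, with at least one factor centrally thin.

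We can also check that this free join is consistent with odd dimension. If $P = F \circ_\Z G$, then $d = \dim F + \dim G + 1$. With $\dim F$ even (as $F$ is centrally thin) and $d$ odd, we get $\dim G$ even as well. This is not needed for the statement, but it confirms that nothing degenerate occurs.

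There is no real obstacle, since all the work is contained in Theorem~\ref{thin-char}. The only point requiring care is the convention that a lattice point is centrally thin. This is harmless, because a point is $0$-dimensional, hence even, and so the parity argument still applies.
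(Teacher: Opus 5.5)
Your proof is correct and follows the paper's route: the corollary is deduced from Theorem~\ref{thin-char}, using that centrally thin Gorenstein polytopes have dimension $2s$ and are therefore even-dimensional. That rules out the first alternative for odd $d$ and leaves only the free join.
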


Now, the proof of the degree result on stringy $E$-polynomials follows from previous joint work with Schepers \cite{NS13}.

\begin{proof}[Proof of Theorem~\ref{thin-thm}]

 We may assume $d > 0$. Let $n := 2s-d-1$ be the CY-dimension of $P$. It follows from \cite[Corollary~3.7]{NS13} that the stringy $E$-polynomial vanishes if $n < 0$ or equivalently $d \ge 2s$. In particular, centrally thin Gorenstein polytopes have vanishing stringy $E$-polynomial. 

Now, let $P$ be thin. By Theorem~\ref{thin-char} and the multiplicativity of stringy $E$-polynomials for free joins, we see that $\Est(P;u,v)=0$. 

On the other hand, assume that $P$ is not thin. By \cite[Corollary~6.12]{borger_thin_2023} $P^\times$ is also not thin. By \cite[Proposition~6.9]{borger_thin_2023}, the degree of the local $h^*$-polynomial of $P^\times$ equals the degree of $P^\times$. Thus, by \cite[Lemma~6.1]{NS13}, the empty face $\emptyset$ contributes to the constant coefficient of $\Est(P;u,v)$. Hence, by Poincar\'e duality \cite[Proposition~3.2(2)]{NS13} and the polynomiality of the stringy $E$-polynomial (\cite[Corollary~3.7]{NS13} and \cite[Remark~3.4(1)]{NS13}) $\Est(P;u,v)$ has degree $2n$.

\end{proof}

\section{The Cayley descent}

The coarse idea of the proof is to analyze the proof of Theorem~\ref{cayley} about Cayley polytopes more precisely and then to apply this version successively. In order for this to work, the irreducibility assumption will be translated into a statement on tuples of lattice polytopes, and a different ingredient already used in \cite{Nil24}, namely, the nonnegativity of the mixed degree, will become crucial.

\subsection{Preliminaries on Gorenstein polytopes and reflexive Gorenstein cones}

Throughout, every lattice polytope is considered in the lattice of its affine
span, and every isomorphism is an affine lattice isomorphism.  Let us recall standard
facts about reflexive and Gorenstein polytopes as in \cite{BN08,NS13}.

Let us define $\overline M := M \oplus \Z$. The dual lattices of $M$ and $\overline M$ are denoted by $N$ and $\overline N$. For a lattice polytope $P\subset M_{\R}$, its {\em homogenized cone} is
\[
  C_P:=\R_{\geq0}(P\times\{1\})\subset \MRbar.
\]

A pointed rational cone $\sigma\subset \MRbar$ is a {\em Gorenstein cone} if its primitive ray generators lie in an affine hyperplane
$\langle n_{\sigma},-\rangle=1$ for a primitive
$n_{\sigma}\in\overline N$ (called the {\em degree functional} of $\sigma$). It is a {\em reflexive Gorenstein cone}
if its dual cone $\sigma^\vee$ is also Gorenstein.  In this case, if $m_{\sigma^{\vee}}\in\overline M$ is the degree functional of $\sigma^\vee$, the {\em index} of $\sigma$ (and $\sigma^\vee$) is
\[
  \langle n_{\sigma},m_{\sigma^{\vee}}\rangle.
\]
Now, the polytope $P$ is Gorenstein of index $r$ if and only if $C_P$ is reflexive Gorenstein of index $r$. The {\em support polytope} (i.e., the convex hull of the primitive ray generators) of $C_P^{\vee}$ is the {\em dual
Gorenstein polytope}, denoted by $\dual P$. Hence, Gorenstein duality preserves dimension, degree, and codegree.

\subsection{Analyzing Gorenstein Cayley polytopes more carefully}

Let us use the same notation as before. By \cite[Proposition~2.3]{BN08} it is known that each Cayley decomposition of $P$ in $l+1$ factors corresponds to writing $n_{C_P}$ as a sum of $l+1$ non-zero lattice points in $C_P^{\vee}$. More precisely, if $n_{C_P} = w_0 + \cdots + w_l$ for $w_0, \ldots, w_l \in (C_P^{\vee} \cap \overline N)\setminus\{0\}$, then there is a lattice projection $\overline M \to \Z^{l+1}$ given by 
\[x \mapsto (\pro{w_0}{x}, \ldots, \pro{w_l}{x})\]
mapping $P  \times \{1\}$ onto $\conv(e_0, \ldots, e_l)$. We call this the associated {\em Cayley presentation} with Cayley factors the preimages of the vertices.  Every Cayley decomposition can be described in this way. The image of a point $x \in \overline M_\R$ under this projection is called the associated {\em Cayley coordinates}. Note that their sum equals precisely the {\em height} $\pro{n_{C_P}}{x}$.

Suppose that $P=P_0\cayley\cdots\cayley P_l$ is Gorenstein of codegree $r$. The unique interior lattice point of $r(P\times\{1\})$ (namely, $m_{C_P^{\vee}}$) has positive integral Cayley
coordinates
\[
  (k_0,\ldots,k_l),
  \qquad \text{ with } 
  k_0+\cdots+k_l=r.
\]
We say, the Cayley presentation has {\em type} $(k_0,\ldots,k_l)$.

\begin{proposition}\label{prop:weighted-cayley}
Let $P_0, \ldots, P_l$ be lattice polytopes with $n:=\dim(P_0+\cdots+P_l)$. Let $(k_0,\ldots,k_l)\in\Z_{\geq1}^{l+1}$. Then $P_0 * \cdots * P_l$ is Gorenstein with Cayley presentation of type $(k_0,\ldots,k_l)$ if and only if
\begin{enumerate}
  \item $k_0P_0+\cdots+k_lP_l$ is reflexive up to translation;
  \item if $k_i\geq2$, then
  \[
    \dim\Bigl(\sum_{j\neq i}P_j\Bigr)<n.
  \]
\end{enumerate}
\end{proposition}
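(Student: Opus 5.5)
The plan is to describe the cone $C_P$ of $P=P_0\cayley\cdots\cayley P_l$ explicitly, list its facets, and read off reflexivity from the lattice distances of a candidate interior point. We work in the lattice of the affine span, so after translating we may assume $P_0+\cdots+P_l$ is full-dimensional in $M_\R$ with $\dim M_\R=n$. Then $\overline M$ is replaced by $M\oplus\Z^{l+1}$, and
\[
  C_P=\{(x,\lambda)\in M_\R\times\R^{l+1}_{\geq0} : x\in \lambda_0P_0+\cdots+\lambda_lP_l\}.
\]
The degree functional is $n_{C_P}=e_0^*+\cdots+e_l^*$, and the Cayley coordinates of $(x,\lambda)$ are just $\lambda$. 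Recall that $C_P$ is reflexive Gorenstein of index $r$ exactly when there is a lattice point $m\in\operatorname{int}C_P$ at which every primitive facet normal takes the value $1$; this $m$ is then $m_{C_P^\vee}$. So the type of a Gorenstein Cayley presentation is the $\lambda$-part of such an $m$.

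First I will determine the facets of $C_P$ and their primitive inner normals. They come in two kinds.

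\emph{(a) Coordinate facets.} For each $i$ consider the face $\{\lambda_i=0\}$. This face is the cone over $\conv(P_j\times e_j: j\neq i)$, so by \eqref{eq:cayley-dimension} its dimension is $\dim(\sum_{j\neq i}P_j)+l$. Hence it is a facet if and only if $\dim(\sum_{j\neq i}P_j)=n$. When it is a facet, its primitive normal is $e_i^*$.

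\emph{(b) Minkowski facets.} For each facet of the Minkowski sum with primitive inner normal $u\in N$, there is a facet of $C_P$ with normal
\[
  (x,\lambda)\mapsto \pro{u}{x}-\sum_i \lambda_i\min_{P_i}u .
\]
This normal is integral because the $P_i$ are lattice polytopes, and it is primitive because $u$ is primitive. Here we use that $\sum\lambda_iP_i$ has the same normal fan for all $\lambda>0$.

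Showing that the facets of $C_P$ are exactly (a) and (b) is the step I expect to need the most care. The approach is to note that the support function of $C_P$ in direction $(u,\mu)$ is $\pro{u}{x}+\sum\lambda_i(\mu_i+\min_{P_i}u)$. A supporting hyperplane with $u\neq0$ gives a facet only if $u$ is a facet normal of the Minkowski sum and $\mu_i=-\min_{P_i}u$. Supporting hyperplanes with $u=0$ give the coordinate faces.

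Next, for $\lambda$ with all $\lambda_i>0$, the interior of $C_P$ at $\lambda$ consists of the points $x$ in the interior of $\sum\lambda_iP_i$. Take a lattice point $m=(x,k)$ with all $k_i\geq1$. Evaluating the normals gives the following. The value of the normal from (b) at $m$ is the lattice distance of $x$ from the corresponding facet of $k_0P_0+\cdots+k_lP_l$. The value of $e_i^*$ from (a) is $k_i$.

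Both directions of the proposition then follow. Suppose $P$ is Gorenstein with a presentation of type $(k_i)$, and set $m=m_{C_P^\vee}$, whose $\lambda$-part is $(k_i)$. All values at $m$ equal $1$. From (b), $x$ is an interior lattice point of $\sum k_iP_i$ at distance $1$ from every facet, so $\sum k_iP_i$ is reflexive up to translation; this is (i). From (a), if $k_i\geq2$, then $\{\lambda_i=0\}$ cannot be a facet, which is exactly (ii). Conversely, assume (i) and (ii). Let $x$ be the interior point of $\sum k_iP_i$ given by (i), and put $m=(x,k)$. Then $m$ lies in $\operatorname{int}C_P$, it has value $1$ on all facets of type (b) by (i), and it has value $1$ on all facets of type (a) by (ii). Hence $C_P$ is reflexive Gorenstein and $m=m_{C_P^\vee}$, with index $\sum k_i$ and type $(k_0,\ldots,k_l)$.
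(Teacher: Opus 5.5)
Your proof is correct. The paper gives no proof of this statement; it imports it as \cite[Proposition~3.1]{Nil24}. Your facet computation is therefore a self-contained replacement for that citation rather than a variant of an argument given here. You write $C_P=\{(x,\lambda):\lambda\ge0,\ x\in\sum_i\lambda_iP_i\}$ and show that its facets are of two kinds. The coordinate facets $\{\lambda_i=0\}$ have normal $e_i^*$ and occur exactly when $\dim(\sum_{j\neq i}P_j)=n$. The lifts of facets of $\sum_iP_i$ have primitive normal $(u,-\min_{P_0}u,\ldots,-\min_{P_l}u)$. You then test the candidate point $(x,k)$. Compared with the bare citation, this makes both conditions transparent. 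Condition (ii) says exactly that no facet normal $e_i^*$ can exist when $k_i\ge2$, since it would take the value $k_i$ at $m_{C_P^\vee}$. Condition (i) is the lattice-distance-one condition on the slice of $C_P$ over $\lambda=k$.

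Two places need tightening when you write it out. First, the ``support function'' sentence is garbled, since as written it depends on $(x,\lambda)$. State instead $C_P^\vee=\{(u,\mu):\mu_i\ge-\min_{P_i}u\ \text{for all } i\}$. The face cut out by $(u,\mu)$ is the cone over the Cayley polytope of the $u$-minimal faces $P_i^u$ for $i\in I:=\{i:\mu_i=-\min_{P_i}u\}$. By \eqref{eq:cayley-dimension} it has dimension $\dim(\sum_{i\in I}P_i^u)+|I|$. This equals $n+l$ only in your cases (a) and (b), which settles completeness. It also shows directly that the faces in (b) are facets. Second, ``after translating'' should mean translating each $P_i$ by a lattice point, which is a lattice automorphism of the Cayley polytope. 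You then pass to the sublattice $M\cap\lin(\sum_iP_i)$.
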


This is \cite[Proposition~3.1]{Nil24}. 

Using this notation, let us now make the Cayley decomposition result from \cite{HNP09} more explicit.
\begin{lemma}
\label{lem}
Let $P$ be a Gorenstein polytope of dimension $d$, degree $s>0$, and
codegree $r=d+1-s$.  If
\[
  q:=d+1-2s=r-s>0,
\]
then $P$ has a Cayley presentation of type $(s,1^{(q)})$.
\end{lemma}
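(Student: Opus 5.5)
The plan is to start from a Cayley presentation supplied by Theorem~\ref{cayley}, record its type, and then coarsen it. The coarsening uses the fact, from the correspondence \cite[Proposition~2.3]{BN08}, that summing some of the $w_i$ in $n_{C_P}=w_0+\cdots+w_l$ gives another Cayley presentation whose type is the corresponding partial sums of $(k_0,\ldots,k_l)$. The step I expect to be hard is showing that enough entries of the type equal $1$.

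\textbf{Step 1 (starting presentation).} Since $q>0$ means $d\ge 2s$, Theorem~\ref{cayley} gives $P\cong P_0\cayley\cdots\cayley P_l$ with $n:=\dim(P_0+\cdots+P_l)\le 2s-1$. By \eqref{eq:cayley-dimension}, $l=d-n\ge q$. Let $(k_0,\ldots,k_l)$ be the type, so $\sum k_i=r=n+l+1-s$ and hence $\sum_i (k_i-1)=n-s$.

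\textbf{Step 2 (coarsening).} Let $T=\{i: k_i\ge 2\}$, $t=|T|$, and let $O$ be the set of indices with $k_i=1$. Suppose $|O|\ge q$. Pick $q$ indices of $O$ and keep their $w_i$ separate. Merge every other $w_j$ (all of $T$ and the remaining indices of $O$) into one $w_0'$, which is a nonzero lattice point of $C_P^\vee$. The resulting presentation has $q+1$ factors, and its type is $(r-q,1^{(q)})=(s,1^{(q)})$, as claimed.

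So everything reduces to showing $|O|=l+1-t\ge q=n+l+1-2s$, i.e.
\[
t\le 2s-n .
\]

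\textbf{Step 3 (the inequality; main obstacle).} I expect this to be the hard part. I would first try to choose the starting presentation with $l$ maximal, i.e.\ no $w_i$ splits further. Then I would exploit Proposition~\ref{prop:weighted-cayley}:
\begin{itemize}
\item Condition (ii) says each $P_i$ with $i\in T$ is needed for the full dimension $n$ of the Minkowski sum.
\item Condition (i) says $\sum k_iP_i$ is reflexive.
\end{itemize}
Consider the face $F=\conv(P_i : i\in T)$ of $P$. Projecting $m_{C_P^\vee}$ to the Cayley coordinates indexed by $T$ should show that $F$ (or the complementary face) has codegree at least $\sum_{i\in T}k_i\ge 2t$. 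I would then compare this with $\dim F+1$, using nonnegativity of the degree, or more sharply the nonnegativity of the mixed degree of the family $(P_i)_{i\in T}$ as in \cite{Nil24}. Since the factors in $T$ jointly contribute at least $t$ to the dimension $n$, this should give $n+t\le 2s$.

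If this counting does not close directly, the fallback is to rerun the original argument of \cite{HNP09} on the dual side. There $n_{C_P}$ is the interior lattice point of $rP^\times$, and one peels off lattice points $v\in P^\times$ with $n_{C_P}-v\in C_P^\vee$ one at a time, $q$ times. Each peel costs height $1$, and the remainder after $q$ peels has height $s$ and is automatically nonzero. The peeling is possible as long as the remaining height exceeds $s$, by the same $h^*$-symmetry/degree argument that proves Theorem~\ref{cayley}.
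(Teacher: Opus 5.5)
Steps 1--3 do not give a proof. All of the lemma's content is in the inequality $t\le 2s-n$, and your Step~3 sketch does not reach it. Suppose the codegree bound $\codeg F\ge\sum_{i\in T}k_i$ holds (you assert it but do not justify it). Combined with $\codeg F\le\dim F+1=\dim\bigl(\sum_{i\in T}P_i\bigr)+t$, it only gives $\sum_{i\in T}(k_i-1)\le\dim\bigl(\sum_{i\in T}P_i\bigr)$. Nonnegativity of the mixed degree of $(P_i)_{i\in T}$ only gives $t\le\dim\bigl(\sum_{i\in T}P_i\bigr)$. Take $n=2s-1$, which Theorem~\ref{cayley} allows. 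Then both inequalities still permit $t=s-1$ (all $k_i=2$), whereas you need $t\le 2s-n=1$. Maximality of $l$ plays no role anywhere in the sketch. So your primary route has a real gap, and you do not need it.

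Your fallback is exactly the paper's proof, which follows \cite{HNP09}. It also makes Theorem~\ref{cayley} and Steps 1--2 unnecessary. Peeling $q$ height-one lattice points of $P^\times$ off $n_{C_P}$ gives $n_{C_P}=w+v_1+\cdots+v_q$ with $w$ of height $s>0$, hence a presentation of type $(s,1^{(q)})$.

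What you still owe is the justification of a single peel, and it is not $h^*$-symmetry. Let $x\in C_P^\vee\cap\overline N$ have height $>s$. Pick a full-dimensional lattice simplex $S\subseteq P^\times$ with vertices $v_0,\ldots,v_d$ whose cone contains $x$, and write $x=\sum c_iv_i$ with $c_i\ge0$. If some $c_i\ge1$, subtract $v_i$. Otherwise $x$ is a lattice point of the half-open fundamental parallelepiped of $S$. Its height $\sum c_i$ is then at most $\deg S\le\deg P^\times=s$, by monotonicity of the degree and invariance of the degree under Gorenstein duality, which is a contradiction.

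The paper does this in one step. It splits $n_{C_P}$ into integral and fractional parts over a single simplex, obtaining $N\ge q$ vertex summands, and then sets $w:=n_{C_P}-u_1-\cdots-u_q$.
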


Here, $(s,1^{(q)})$ stands for $(s,1,\ldots,1)$, with $q$ many $1$s.

\begin{proof}
Let $\sigma=C_P$, and let $n_{\sigma}$ and $m_{\sigma^{\vee}}$ be the
degree elements of $\sigma$ and $\sigma^{\vee}$, so $\langle m_{\sigma^{\vee}},n_{\sigma}\rangle=r$.

We follow the proof of \cite[Theorem~3.1]{HNP09}.  Choose a full-dimensional lattice simplex $S$ with vertices $v_0,\ldots,v_d$ in $P^\times$ whose cone contains $n_{\sigma}$.  Splitting
the coefficients of $n_{\sigma}$ into integral and fractional parts gives
\[
  n_{\sigma}=u_1+\cdots+u_N+\{n_{\sigma}\},
  \qquad
  \{n_{\sigma}\}=\sum_{i=0}^{d}b_iv_i,
  \quad 0\leq b_i<1,
\]
where each $u_j$ is a vertex of the simplex $S$ (not necessarily distinct). Note that as in \cite{HNP09} by the interpretation of the degree and its monotonicity properties, we get

\[\sum_{i=0}^d b_i\leq \deg(S)\le \deg(P^\times) = s.\] 
Every such vertex has height one with respect to
$m_{\sigma^{\vee}}$, hence
\[
  N=\pro{m_{\sigma^{\vee}}}{u_1+\cdots+u_N}=\pro{m_{\sigma^{\vee}}}{n_\sigma-\{n_{\sigma}\}}=r-\sum_{i=0}^d b_i\geq r-s=q.
\]
We set
\[
  w:=n_{\sigma}-u_1-\cdots-u_q.
\]
Clearly, $w\in\sigma^{\vee}\cap\overline N$.  Moreover,
\[
  \langle m_{\sigma^{\vee}},w\rangle=r-q=s>0,
\]
and therefore $w\neq0$.  We have obtained a decomposition
\[
  n_{\sigma}=w+u_1+\cdots+u_q
\]
into nonzero lattice points of $\sigma^{\vee}$, and thus a Cayley presentation of $P$ of type
\[
  \bigl(
  \langle m_{\sigma^{\vee}},w\rangle,
  \langle m_{\sigma^{\vee}},u_1\rangle,\ldots,
  \langle m_{\sigma^{\vee}},u_q\rangle
  \bigr)
  =(s,1,\ldots,1).
\]
\end{proof}

\subsection{Expanded family, irreducibility constraints and the mixed degree}

Quite naturally, families of lattice polytopes play an important role when considering lattice polytopes of small degree.

Following above notation, if a Gorenstein Cayley polytope $P\cong P_0 * \cdots * P_l$ has a Cayley presentation of type $(k_0,\ldots,k_l)$, its \emph{expanded family} is the family of lattice polytopes in which $P_0, \ldots, P_l$ occur with multiplicities $k_0, \ldots, k_l$. Its Minkowski sum is by Proposition~\ref{prop:weighted-cayley} the reflexive polytope $\sum_{i=0}^l k_iP_i$. Recall that the expanded family contains $k_0 + \cdots + k_l=r$ lattice polytopes (not necessarily distinct), where $r$ is the codegree of $P$.

\begin{example}
    Let $P_0 := \conv(0,e_1)$ and $P_1 := \conv(0,2e_2)$. Then $2P_0+P_1=[0,2]^2$ is reflexive (up to translation) and $\dim(P_1) < 2$. By Proposition~\ref{prop:weighted-cayley} $P_0 * P_1$ is a Gorenstein polytope (a three-dimensional tetrahedron) with Cayley presentation of type $(2,1)$. The expanded family is $P_0,P_0,P_1$.
    \label{reducible-example}
\end{example}

Let us say that a lattice polytope is {\em hollow} if it does not contain any lattice points in its relative interior.

\begin{lemma}\label{lem:hollow}
Let $P=P_0\cayley\cdots\cayley P_l$ be an irreducible Gorenstein polytope,
and assume that every $P_0, \ldots, P_l$ has positive dimension. Then every proper nonempty Minkowski subsum of the expanded family is hollow.
\end{lemma}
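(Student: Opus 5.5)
We argue by contraposition: assuming that some proper nonempty subsum of the expanded family contains a lattice point in its relative interior, we show that $P$ is reducible. We may assume the Cayley presentation has type $(k_0,\ldots,k_l)$, with $\sigma=C_P$, and write $m:=m_{\sigma^\vee}$ for the unique interior lattice point of $r(P\times\{1\})$. In Cayley coordinates,
\[
m=\sum_{i=0}^l (x_i,k_ie_i),\qquad x_i\in k_iP_i,
\]
after translating so that $\sum_i x_i$ is the interior point $0$ of the reflexive polytope $R:=\sum_i k_iP_i$ (Proposition~\ref{prop:weighted-cayley}(i)). A proper nonempty subsum of the expanded family is
\[
Q=\sum_i j_iP_i,\qquad 0\le j_i\le k_i,
\]
with $(j_i)\ne 0$ and $(j_i)\ne(k_i)$. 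Let $Q':=\sum_i(k_i-j_i)P_i$ be the complementary subsum, so that $Q+Q'=R$.

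\emph{Step 1 (splitting $m$).} Suppose $y$ is a lattice point in $\relint Q$. Set $m_1:=(y,(j_i)_i)$, which is a lattice point of $\sigma$ of height $\sum_i j_i$ lying in the cone over $\conv(P_i\times e_i : j_i>0)$. Set $m_2:=m-m_1$. We need $m_2\in\sigma$, which amounts to $-y\in Q'$. Here we would use that $y\in\relint Q$ and $0\in\operatorname{int}R$. Reflexivity of $R$ says every facet inequality of $R$ has the form $\pro{u}{\cdot}\ge -1$. Each facet normal $u$ of $R$ satisfies
\[
\min_R u=\min_Q u+\min_{Q'} u .
\]
We aim to prove $\pro{u}{y}\le-\min_{Q'}u$ for every such $u$; this gives $-y\in Q'$, because the normal fan of $R$ refines that of $Q'$. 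The inequality should follow by combining the integrality $\pro{u}{y}>\min_Q u$ (strict when $u$ is nonconstant on $Q$, by relative interiority of $y$) with the fact that $\min_R u=-1$ is integral. This forces $\pro{u}{y}\ge\min_Q u+1$, and hence the required bound.

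\emph{Step 2 (nontriviality).} Both $m_1$ and $m_2$ are nonzero lattice points of $\sigma$ of positive height, because the tuple $(j_i)$ is proper. We also want neither to be a rational multiple of $m$. This is where positive dimensionality of all $P_i$ enters. If $m_1$ were proportional to $m$, then $(j_i)$ would be proportional to $(k_i)$. Condition (ii) of Proposition~\ref{prop:weighted-cayley} together with $\dim P_i>0$ should then place $y$ on the boundary of $Q$ rather than in its relative interior, a contradiction.

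\emph{Step 3 (from the splitting to a Gorenstein join).} This is the main obstacle. We would invoke the characterization of reducibility from \cite{NS13} and \cite[Section~5]{borger_thin_2023}. In the dual formulation, that characterization says $P$ is a Gorenstein join if and only if the interior point $m_{\sigma^\vee}$ splits as $m_1+m_2$, where the $m_t$ are nonzero lattice points lying in the relative interiors of two complementary faces of $\sigma$ whose sum is $\sigma$. The delicate part is to show that the faces $\tau_1,\tau_2$ of $\sigma$ carrying $m_1,m_2$ in their relative interiors are complementary. Our first approach is to use hollowness minimality: choose $(j_i)$ minimal among proper subsums with a relative interior lattice point. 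Minimality should force $\tau_1$ to be a Gorenstein face with $m_1$ as its unique interior point of minimal height, and symmetrically for $\tau_2$. The dimension count $\dim\tau_1+\dim\tau_2=\dim\sigma$ should then follow from additivity of codegrees, $\sum_i j_i+\sum_i(k_i-j_i)=r$. If this direct route stalls, the fallback is to pass to $P^\times$, where the splitting of $m$ becomes a splitting of the degree functional. There, Lemma~\ref{lem}-type Cayley arguments apply, and irreducibility of $P^\times$ is equivalent to irreducibility of $P$.
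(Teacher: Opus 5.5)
There is a genuine gap, and it is not confined to Step~3. In Step~1 the inequality runs the wrong way. To conclude $-y\in Q'$ you need, for every facet normal $u$ of $R$, the \emph{upper} bound $\pro{u}{y}\le-\min_{Q'}u=\min_Q u+1$. Relative interiority plus integrality only give the \emph{lower} bound $\pro{u}{y}\ge\min_Q u+1$. So $m_2\in\sigma$ is not established. The needed bound is true, but it is a structural fact about Minkowski decompositions of reflexive polytopes, not a consequence of integrality.

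Step~3 is only a heuristic. Neither minimality of $(j_i)$ nor the additivity of heights $\sum_i j_i+\sum_i(k_i-j_i)=r$ forces $\dim\tau_1+\dim\tau_2=\dim\sigma$, and this linear independence is exactly what is at stake. Your criterion also has a problem: as stated, splitting $m_{\sigma^\vee}$ across two complementary faces only says that $P$ is a \emph{degree} join of them. A Gorenstein join additionally needs a Cayley decomposition with these two factors. That holds when $\tau_1,\tau_2$ are Cayley faces, but complementarity of your $\tau_1,\tau_2$ already requires $j_i\in\{0,k_i\}$ for every $i$, which you have not shown.

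The missing ingredient is \cite[Propositions~6.11 and~6.13]{BN08}. It says: if $Q_I+Q_J$ is reflexive and $Q_I$ has a relative interior lattice point $x_1$, then $Q_J$ has one, $x_2$. Moreover $Q_I-x_1$ and $Q_J-x_2$ are reflexive, and $\lin(Q_I-x_1)\oplus_\R\lin(Q_J-x_2)=\MRR$. This gives your Step~1 at once, since $x_1+x_2$ is the interior lattice point of $R$.

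It is also where positive dimensionality enters. If some $P_k$ occurred in both $I$ and $J$, the nonzero space $\lin(P_k-P_k)$ would lie in both summands. Hence each multiplicity block lies entirely on one side, say $P_0,\ldots,P_a$ versus $P_{a+1},\ldots,P_l$. Then \cref{prop:weighted-cayley}, combined with the direct sum, shows that $P_0\cayley\cdots\cayley P_a$ and $P_{a+1}\cayley\cdots\cayley P_l$ are Gorenstein of codegrees $k_0+\cdots+k_a$ and $k_{a+1}+\cdots+k_l$. The corresponding Cayley faces exhibit $P$ as a Cayley join and a degree join, contradicting irreducibility.

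Your Step~2 aims at the wrong nondegeneracy. Proportionality of $(j_i)$ and $(k_i)$, and indeed any subsum with all $j_i>0$, is excluded directly: $(y,(j_i)_i)$ would then be an interior lattice point of $\sigma$ of height $<r$. The condition that actually matters is $j_i\in\{0,k_i\}$ for all $i$.
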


\begin{proof}
Let $r$ be the codegree of $P$. We denote by $Q_1, \ldots, Q_r$ the associated expanded family. 
Suppose that a proper subsum $Q_I := \sum_{i \in I} Q_i$ (for $\emptyset \not= I \subsetneq \{1, \ldots, r\}$) contains a relative interior lattice point $x_1$, and let $Q_J$ be the sum of the complementary subfamily. Since $Q_I+Q_J$ is reflexive (up to lattice translation), \cite[Propositions~6.11 and~6.13]{BN08} show that $Q_J$ also has a relative interior lattice point $x_2$ such that $Q_I-x_1$ and $Q_J-x_2$ are reflexive (with relative interior lattice points $0$) in complementary vector spaces:
\begin{equation}\label{eq:direct-sum}
  \lin(Q_I-x_1)\oplus_{\R}\lin(Q_J-x_2)=M_{\R}.
\end{equation}
For $k \in \{0,\ldots, l\}$ the polytope $P_k$ cannot appear in $I$ and in $J$ at the same time, i.e., $P_k=Q_i=Q_j$ for $i \in I$ and $j \in J$, as otherwise the positive-dimensional space $\lin(P_k-P_k)$ would lie in both summands of
\eqref{eq:direct-sum}.  Thus every multiplicity block is entirely contained
in $I$ or entirely contained in $J$.

Hence, all Cayley factors $P_0, \ldots, P_l$ split in two parts, say, $P_0, \ldots, P_a$ and $P_{a+1}, \ldots, P_l$, with $Q_I=\sum_{i=0}^a k_i P_i$ and $Q_J=\sum_{j=a+1}^l k_j P_j$ being reflexive. If, say, $k_0 \ge 2$, then by \cref{prop:weighted-cayley} $\dim((\sum_{i=1}^a P_i)-x_1+Q_J-x_2) < \dim(M_{\R}) = \dim(Q_I-x_1+Q_J-x_2)$. Hence \eqref{eq:direct-sum} implies $\dim(\sum_{i=1}^a P_i) < \dim(Q_I)=\dim(\sum_{i=0}^a P_i)$. So, again by \cref{prop:weighted-cayley} $P_0 * \cdots * P_a$ is Gorenstein with codegree $k_0 + \cdots + k_a$. In the same way, we get $P_{a+1} * \cdots * P_l$ is Gorenstein with codegree $k_{a+1} + \cdots + k_l$. Hence, if $F$ and $G$ are the corresponding Cayley faces of $P$, then $P$ is a degree join (as the codegrees add up) and a Cayley join (this follows from \eqref{eq:direct-sum}). Hence, $P$ is a Gorenstein join of $F$ and $G$, contradicting irreducibility.
\end{proof}

Note that Example~\ref{reducible-example} shows that irreducibility is necessary.

In order to exploit this result in what follows, the notion of a mixed degree and mixed codegree introduced in \cite{Nil20} will become useful. For a family $P_1,\ldots,P_m$ of lattice polytopes its {\em mixed codegree} is the smallest cardinality
of a nonempty subfamily whose Minkowski sum is not hollow; it is $m+1$ if all
nonempty subsums are hollow.  Its {\em mixed degree} is
\[
  \mdeg(P_1,\ldots,P_m)
  :=\dim(P_1+\cdots+P_m)+1-\mcodeg(P_1,\ldots,P_m).
\]
For us, the important fact is that the mixed degree is nonnegative \cite[Proposition~4]{Nil20}.

\subsection{Improving Cayley presentations for irreducible Gorenstein polytopes}

We will now use the previous considerations and apply them to the main objects of our interest: Gorenstein polytopes with negative Calabi-Yau dimension $q$ at least $1$. As we would like to show, they can only be irreducible if $q=1$. The following lemma is the crucial step in proving this. The essential new ingredient here is to exploit the rarely used fact from \cite{BN08} that projecting reflexive polytopes along a Minkowski summand still yields reflexive polytopes.

\begin{lemma}\label{lem:descent}
Let $P$ be an irreducible Gorenstein polytope of dimension $d$, degree $s$, codegree $r$, and with negative Calabi-Yau dimension $q\geq1$.  Suppose that $P$ has a Cayley presentation of type $(a,1^{(m)})$, where $m\geq1$. Then $a\geq q$, and one of
the following holds:
\begin{enumerate}
  \item $P$ has a Cayley presentation with all $r$ weights equal to one;
  \item $P$ has a Cayley presentation of type $(b,1^{(r-b)})$ for some integer
  \[
    q\leq b\leq a-q.
  \]
\end{enumerate}

\end{lemma}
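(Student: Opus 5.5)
The plan is to read off both claims from the Minkowski sum attached to the given presentation, using the mixed degree for the inequalities and a projection along a Minkowski summand for the new presentation.

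\textbf{Setup.} Write $P\cong P_0\cayley P_1\cayley\cdots\cayley P_m$ for the given presentation of type $(a,1^{(m)})$, so $a+m=r$. By Proposition~\ref{prop:weighted-cayley}, $R:=aP_0+P_1+\cdots+P_m$ is reflexive up to translation. Its dimension is $n=d-m$ by \eqref{eq:cayley-dimension}.

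First I check that every $P_i$ is positive-dimensional. If some $P_i$ were a point, it would be a vertex of $P$ forming a whole Cayley fiber. That vertex would have lattice distance one from the opposite face, so $P$ would be a lattice pyramid, hence a free join, hence reducible.

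Thus Lemma~\ref{lem:hollow} applies. All proper subsums of the expanded family ($a$ copies of $P_0$ together with $P_1,\dots,P_m$, i.e.\ $r$ polytopes) are hollow, while the full sum $R$ is not hollow. Hence the mixed codegree is $r$, and nonnegativity of the mixed degree gives $n+1-r\ge 0$. Substituting $n=d-r+a$ and $d=r+s-1$ yields $a\ge 2r-d-1=r-s=q$, which is the first claim.

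\textbf{The case $a=1$.} Then the presentation already has all $r$ weights equal to one, and we are in case (i).

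\textbf{The case $a\ge 2$: projecting.} Proposition~\ref{prop:weighted-cayley}(ii) gives $\dim(P_1+\cdots+P_m)<n$. Let $L$ be the linear span of $P_1+\cdots+P_m$ (after translation), and let $\pi\colon M_\R\to M_\R/L$ be the quotient map.

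By the result of \cite{BN08} that projecting a reflexive polytope along a Minkowski summand stays reflexive, $\pi(R)=a\,\pi(P_0)$ is reflexive of dimension $n':=n-\dim L>0$. So $Q:=\pi(P_0)$ is a Gorenstein polytope of index $a$ and degree $s':=n'+1-a$.

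I then bound $\dim L$ from below. The subfamily $P_1,\dots,P_m$ has all nonempty subsums hollow by Lemma~\ref{lem:hollow}, so its mixed codegree is $m+1$. Nonnegativity of the mixed degree gives $\dim L\ge m$. Hence
\[
n'\le n-m=d-2m,
\]
and the negative CY-dimension of $Q$ satisfies
\[
q_Q:=a-s'=2a-1-n'\ge 2a-1-d+2m=q .
\]
The last equality uses $m=r-a$ and $d=r+s-1$.

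\textbf{Lifting a presentation of $Q$.} If $s'=0$, then $Q$ is a unimodular simplex. It then has a Cayley presentation of type $(1^{(a)})$, which lifts to one of $P$ with all weights one: case (i). If $s'>0$, Lemma~\ref{lem} gives $Q$ a Cayley presentation of type $(s',1^{(q_Q)})$. I lift it to a Cayley presentation of $P$ of type $(b,1^{(r-b)})$ with $b:=s'$. Then $b=a-q_Q\le a-q$. The lower bound $b\ge q$ is automatic from the first part of the lemma applied to this new presentation, since $r-b\ge m\ge1$.

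\textbf{Main obstacle.} I expect the lifting step to be the hardest. The splitting of the degree functional of $C_Q$ lives in $(C_Q)^\vee\subset(\overline{M/L})^*$, which embeds into $C_P^\vee$ as functionals vanishing on $L$. I must check that the decomposition
\[
n_{C_P}=w_0+u_1+\cdots+u_m
\]
(with $w_0$ the part for weight $a$) refines. Concretely, $w_0$ should be the pullback of the degree functional of the cone over $Q$, so that splitting it into $q_Q+1$ nonzero lattice points of $(C_Q)^\vee$ gives a splitting of $n_{C_P}$ into nonzero points of $C_P^\vee$. The types should then be read off through $m_{C_P^\vee}$ as in the proof of Lemma~\ref{lem}. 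Establishing that $w_0$ indeed factors through $\pi$ with the right lattice — because $u_1,\dots,u_m$ cut out exactly the directions of $L$ — is where I would spend the most care.
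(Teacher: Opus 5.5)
Your proposal is correct and is essentially the paper's proof. It uses the same projection along $\lin(P_1+\cdots+P_m)$ with \cite[Corollary~6.7]{BN08}, the same bound $\dim L\ge m$ from the mixed degree of $P_1,\dots,P_m$, Lemma~\ref{lem} applied to $\pi(P_0)$, and $b\ge q$ by reapplying the first claim. Your lifting, which pulls back a splitting of $n_{C_Q}$ along the lattice surjection $(x,t_0,\dots,t_m)\mapsto(\pi(x),t_0)$, is just the dual of the paper's composition of lattice projections. Under this map $w_0=e_0^*$ factors trivially, so nothing about the $u_i$ ``cutting out'' $L$ is needed. The one fact you should record is that this map sends $m_{C_P^\vee}$ to $m_{C_Q^\vee}$, because the interior lattice point of $aP_0+P_1+\cdots+P_m$ projects to the unique interior lattice point of $aQ$.

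The only real variation is how you get $a\ge q$. You take it directly from nonnegativity of the mixed degree of the whole expanded family (mixed codegree $r$, as in Corollary~\ref{irreducible}), whereas the paper goes via $m\le u\le s$. Your route is slightly cleaner and also handles $a=1$ uniformly.
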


\begin{proof}
Let \begin{equation}\label{eq:presentation}
  P=A\cayley B_1\cayley\cdots\cayley B_m \subset \MRR \oplus_\R \R^{m+1}
\end{equation} be the Cayley presentation of type $(a,1^{({m})})$. $P$ is a lattice polytope with respect to the lattice $M \oplus_\Z \Z^{m+1}$, where we may assume $n := \dim(\MRR)=\dim(A+B_1+\cdots+B_m)$. A point factor in \eqref{eq:presentation} would make $P$ a lattice pyramid.
Thus all factors have positive dimension. Since $d=n+m$, $a+m=r$, and $r=d+1-s$, we obtain
\begin{equation}\label{eq:n}
  n=s+a-1.
\end{equation}

As translating the factors by lattice points in $\MRR$ leads to isomorphic Cayley polytopes, we may assume that each $A, B_1, \ldots, B_m$ contains $0$. 
Set
\[
  S:=B_1+\cdots+B_m,
  \qquad
  U:=\lin(S),
  \qquad
  u:=\dim(U) \ge 1.
\]
Every nonempty subsum of $B_1,\ldots,B_m$, including $S$, is a proper
subsum of the expanded family.  It is hollow by \cref{lem:hollow}.  Hence
\[
  \mcodeg(B_1,\ldots,B_m)=m+1,
\]
and nonnegativity of the mixed degree gives
\begin{equation}\label{eq:mleu}
  m\leq u.
\end{equation}

Let us consider the case $a=1$. Thus, $n=s$ by \eqref{eq:n}. Hence, $r=m+1 \le u+1 \le n+1=s+1$. Hence, $q=r-s \le 1=a$ and we are in case (i).

From now on, we have $a \ge 2$. Condition (ii) in \cref{prop:weighted-cayley} gives $u<n$. 

Let
\[
  \pi:M\longrightarrow M':=M/(M\cap U)
\]
be the lattice projection along the subspace $U$. We put $A':=\pi(A)$. Since $A+S$ is $n$-dimensional, $A'$ is
full-dimensional, so $\dim(A')=n-u > 0$. The image $\pi(S)$ is a
lattice point, say $c$.  By our assumption the polytope
$aA+S$ is reflexive up to translation, and
\[
  \pi(aA+S)=aA'+c.
\]
Projection along a Minkowski summand preserves reflexivity
\cite[Corollary~6.7]{BN08}; therefore $aA'$ is reflexive up to
translation.  Thus $A'$ is Gorenstein of
codegree $a$ and with degree

\begin{equation}\label{eq:b}
  b:=\deg A'=(n-u)+1-a=s-u.
\end{equation}
As the degree is nonnegative, we get $u\leq s$.  Together with \eqref{eq:mleu}, this yields $m \le s$, so
\begin{equation}\label{eq:ageq}
  a=r-m\geq r-s=q
\end{equation}
and
\begin{equation}\label{eq:drop}
  0\leq b=s-u\leq s-m=s-(r-a)=a-q.
\end{equation}

In this situation, the idea is to apply the Cayley result to $A'$. Note that the negative CY-dimension of $A'$ equals $a-b$, where \eqref{eq:drop} implies $a-b\geq q\ge 1$. 

We distinguish two cases. 

In the case $b=0$ we have a lattice projection $M \oplus_\Z \Z^{m+1} \to M' \oplus_\Z \Z^{m+1}$ mapping $P$ to 
\[\conv(A'\times\{e_0\},\{0\}\times\{e_1\}, \ldots, \{0\} \times \{e_m\}),\]
which is an $m$-fold lattice pyramid over the unimodular $(a-1)$-simplex $A'$, thus, a unimodular simplex itself of dimension $a-1+m=r-1$. Thus, $(i)$ holds.

So, we can assume $b>0$ and Lemma~\ref{lem} yields a Cayley presentation of $A'$ of type
\begin{equation}\label{eq:inner-type}
  (b,1^{(a-b)}).
\end{equation}
This gives a lattice projection $A' \times \{e_0\} \twoheadrightarrow \conv(\tilde{e}_1,\ldots,\tilde{e}_{a-b+1})$, where $\tilde{e}_1, \ldots, \tilde{e}_{a-b+1}$ is a lattice basis of $\Z^{a-b+1}$. Here, \eqref{eq:inner-type} means that when $x'$ is the unique interior lattice point of $aA'$, we have that the canonical point $(x',a)$ of $\R_{\ge0} ( A' \times \{e_0\})$ gets mapped to $(b,1^{(a-b)})$. Here, we denote for a reflexive Gorenstein cone $\sigma$ the special point $m_{\sigma^\vee}$ its {\em canonical point}.

Summing up, we have induced lattice projections
\[M \oplus_\Z \Z^{m+1} \to M' \oplus_\Z \Z^{m+1} \to \Z^{a-b+1} \oplus_\Z \Z^m,\]
mapping 
\[A * B_1 * \cdots * B_m \twoheadrightarrow A' * \{0\} * \cdots * \{0\} 
\twoheadrightarrow \conv(\tilde{e}_1,\ldots,\tilde{e}_{a-b+1},e_1,\ldots,e_m).\]
Let us determine the type of this new Cayley presentation. Because the old type was $(a,1^{(m)})$, we know that the canonical point of the cone $\R_{\ge0} (A * B_1 * \cdots * B_m)$ can be written as $(x,a,1^{(m)}) \in M \oplus_\Z \Z^{m+1}$, where $x$ is the unique interior lattice point of $aA+S$. We know that $x$ projects to the unique interior lattice point $x'$ of $aA'$. This means that $(x,a,1^{(m)})$ gets mapped to $(x',a,1^{(m)})$, which is the canonical point of $\R_{\ge0} (A' * \{0\} * \cdots * \{0\})$. Now, this point gets mapped to $(b,1^{(a-b)},1^{(m)})=(b,1^{r-b})$, as desired. Note that $b \ge q$ holds by the arguments leading to \eqref{eq:ageq} applied to this new Cayley presentation.
\end{proof}

\begin{corollary}
Let $P$ be an irreducible Gorenstein polytope of dimension $d$ with codegree $r$ and negative CY-dimension $q \ge 1$. Then $P$ is centrally thin (i.e., $q=1$) and $P$ has a Cayley decomposition into $r$ factors.\label{irreducible}
\end{corollary}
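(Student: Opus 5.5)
We plan to prove Corollary~\ref{irreducible} by iterating Lemma~\ref{lem:descent}, starting from Lemma~\ref{lem}, until the first weight drops to one.

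First we dispose of degenerate cases. If $s=0$, then $P$ is a unimodular simplex, hence a lattice pyramid. By our convention this makes $P$ reducible, unless $P$ is a lattice point. In that case $d=0$, $r=1$, $q=1$, and $P$ is centrally thin with a trivial decomposition into $r=1$ factor. So we may assume $s>0$. Then Lemma~\ref{lem} gives a Cayley presentation of type $(s,1^{(q)})$, i.e.\ of type $(a,1^{(m)})$ with $a=s$ and $m=q\ge 1$.

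Next we run the descent. Lemma~\ref{lem:descent} applies to any presentation of type $(a,1^{(m)})$ with $m\ge1$. Either it yields a presentation with all $r$ weights one, or a new presentation of type $(b,1^{(r-b)})$ with $q\le b\le a-q$. In the second case $b\le a-q<a$, since $q\ge1$, so the first weight strictly decreases. Also $r-b\ge r-a+q\ge 1$, so the new $m$ is again at least $1$ and the lemma can be reapplied. The first weight is a positive integer that strictly decreases, so after finitely many steps alternative (i) must occur. This gives a Cayley decomposition into $r$ factors, which is the second claim.

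It remains to show $q=1$, and here we use the inequality $a\ge q$ together with the descent step. If at some stage we are in alternative (ii), then $q\le b\le a-q$, so $a\ge 2q$. We prefer a direct argument, though. Take the presentation of type $(1^{(r)})$ just obtained, i.e.\ $P=P_1\cayley\cdots\cayley P_r$ with all weights one. No factor can be a point, since that would make $P$ a lattice pyramid and hence reducible. By Lemma~\ref{lem:hollow}, every proper nonempty subsum of the $P_i$ is hollow, so for the family $P_1,\dots,P_{r-1}$ we get $\mcodeg=r$. Nonnegativity of the mixed degree then gives $\dim(P_1+\cdots+P_{r-1})\ge r-1$. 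Since the ambient dimension is $n=d-(r-1)$, this yields $d-r+1\ge r-1$, i.e.\ $s\ge r-1$. Equivalently $q=r-s\le 1$, and with $q\ge1$ we get $q=1$. So $P$ is irreducible with $d=2s$, which means $P$ is centrally thin.

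The main obstacle we expect is confirming that Lemma~\ref{lem:descent} really applies at every iteration, i.e.\ that the new $m$ stays at least $1$ and $P$ is still irreducible. Irreducibility is automatic, because $P$ itself is unchanged and only its presentation varies. The bound on $m$ is the inequality $r-b\ge 1$ checked above.
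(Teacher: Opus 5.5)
Your proposal is correct and follows essentially the same route as the paper. You start from the type $(s,1^{(q)})$ presentation of Lemma~\ref{lem}, iterate Lemma~\ref{lem:descent} until an all-unit presentation with $r$ factors appears, rule out point factors by irreducibility, and combine Lemma~\ref{lem:hollow} with nonnegativity of the mixed degree to get $q=r-s\le 1$. The only cosmetic differences are that you apply the mixed-degree bound to the subfamily $P_1,\ldots,P_{r-1}$ (mixed codegree $r$) instead of all $r$ factors, and that you explicitly check $b<a$ and $r-b\ge 1$ to justify the iteration, which the paper leaves implicit.
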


\begin{proof}
Let $s$ be the degree of $P$. We may assume $s > 0$, in particular, $d>0$. As $q\ge1$ Lemma~\ref{lem} yields a Cayley presentation
of type $(a, 1^{(m)})$, where $m \ge 1$. Then Lemma~\ref{lem:descent} is satisfied, and thus we get a Cayley presentation of type $(1^{(r)})$ or $(b,1^{(r-b)})$ with $1 \le b < a$. Doing this successively, we must arrive at an all-unit Cayley presentation with $r$ Cayley factors, say, $P_0, \ldots P_{r-1}$, with $\dim(P_0 + \cdots + P_{r-1})=d+1-r=s$. 

If one of the Cayley factors would be a lattice point, $P$ would be a lattice pyramid, a contradiction to irreducibility. Thus, every factor has positive dimension. Now, Lemma~\ref{lem:hollow} yields that the mixed codegree of $P_0, \ldots, P_{r-1}$ is at least $r$, hence, the mixed degree is at most $s+1-r$. Nonnegativity of the mixed degree implies $0 \le s+1-r$, so, $q=r-s\le 1$, as desired. 
\end{proof}

\subsection{Properties of centrally thin Gorenstein polytopes}

\label{subsec:centrally thin}

We can now show the characterization of centrally thin Gorenstein polytopes. 

\begin{proof}[Proof of Proposition~\ref{tuples}]

Let $P_0, \ldots, P_s$ be lattice polytopes in $\R^s$ with the given properties. In the notation of \cite{batyrevborisov,BN08} this means that $P_0, \ldots, P_s$ is a proper centered irreducible nef-partition of length $s+1$ in $\R^s$. By \cite[Theorem~5.8]{NS13}, $P := P_0 * 
\cdots * P_s$ is an irreducible Gorenstein polytope of dimension $2s$, codegree $s+1$ and degree $s$, so $P$ is centrally thin. On the other hand, let $P$ be a centrally thin Gorenstein polytope of dimension $2s$, codegree $s+1$ and degree $s > 0$. By Corollary~\ref{irreducible}, $P$ has a Cayley decomposition into $s+1$ factors. So, $P \cong P_0 * \cdots * P_s$ for $P_0, \ldots, P_s$ lattice polytopes in $\R^s$ and $Q := P_0 + \cdots + P_s$ is reflexive (with a unique interior lattice point $x \in \R^s$). As $P$ would otherwise be a lattice pyramid, and thus reducible, each factor $P_0, \ldots, P_s$ has positive dimension. By duality, $P^\times$ is also a centrally thin Gorenstein polytope of same dimension, codegree, degree and negative CY-dimension. Hence, again Corollary~\ref{irreducible} implies that $P^\times$ also has a Cayley decomposition into $s+1$ factors. By \cite[Definition~2.4 and Proposition~3.6]{BN08} we have lattice points $p_0 \in P_0, \ldots, p_s \in P_s$ such that $p_0 + \cdots + p_s = x$, thus, $P_0, \ldots, P_s$ form a proper nef-partition in the notation of \cite{BN08}. Let us define lattice polytopes $P'_0 := P_0 - p_0, \ldots, P'_s := P_s - p_s$. Then $P'_0, \ldots, P'_s$ are each positive-dimensional, contain $0$ and satisfy $P'_0 + \cdots + P'_s = Q-x$, which is a reflexive polytope with $0$ in its interior. Note that $P'_0 * \cdots * P'_s \cong P_0 * \cdots * P_s$. Again, \cite[Theorem~5.8]{NS13} shows that $P'_0, \ldots, P'_s$ is an irreducible nef-partition, so all conditions $(i)$ to $(iv)$ are satisfied.
\end{proof}

In the notation of \cite{BN08}, the crucial property is here that the associated reflexive Gorenstein cones of a centrally thin Gorenstein polytope are {\em completely split}.

Let us next show the vertex-spanning property. 

\begin{proof}[Proof of Proposition~\ref{duality}]
Write $s=\deg P$. We may assume $s > 0$, as otherwise $P$ would just be a unimodular simplex, which is clearly vertex-spanning.
We may assume that $P = P_0* \cdots *P_s \subset \R^s \oplus \R^{s+1}$ with $P_0, \ldots, P_s \subset \R^s$ as in Proposition~\ref{tuples}. It follows by Remark~\ref{strengthen} from property (iii)' that the mixed codegree of $P_1, \ldots, P_s$ is $s+1$. Hence, $P_1, \ldots, P_s$ forms a family of lattice polytopes of mixed degree $\dim(P_1 + \cdots + P_s) + 1 - (s+1)$. As $\dim(P_1 + \cdots + P_s) \le s$ and the mixed degree is nonnegative, we get that $P_1, \ldots, P_s$ has mixed degree $0$. By \cite[Theorem~5]{Nil20} and property (i), this implies that 
\begin{equation}\label{eq:mv-one}
  \MV(P_1,\ldots,P_s)=1.
\end{equation}

For $1\leq i\leq s$, set
\[
  D_i:=\{y-x:x,y\in\vertices(P_i)\}\subset \Z^s.
\]
Since the mixed volume is positive, it is well-known (see \cite[Theorem~5.1.8]{Sch93}) that we may therefore choose
linearly independent vectors $w_1, \ldots, w_s \in \Z^s$ with
\[
  w_i=y_i-x_i\in D_i,
  \]
where $x_i,y_i \in \vertices(P_i)$ for $i=1, \ldots, s$, so the segments $[x_i,y_i]$ are contained in $P_i$.  Monotonicity of mixed
volume and \eqref{eq:mv-one} yield
\[
  1
  \leq |\det(w_1,\ldots,w_s)|
  =\MV([x_1,y_1],\ldots,[x_s,y_s])
  \leq1.
\]
Thus $w_1,\ldots,w_s$ form a lattice basis of $\Z^s$.

We have to show that every lattice point in $\Z^s \oplus \Z^{s+1}$ is in the subgroup $\Gamma$ generated by $\{(z_i,e_i) \,:\, z_i \in \vertices(P_i),\; i \in \{0, \ldots, s\}\}$. 

We observe that
\[
  (w_i,0)=(y_i,e_i)-(x_i,e_i) \in \Gamma,
  \qquad 1\leq i\leq s,
\]
so by the lattice basis property we get $\Z^s \oplus \{0\}\subset \Gamma$. 

Choose a vertex $z_i$ of $P_i$ for $i=0, \ldots, s$. Then
\[
  (0,e_i)=(z_i,e_i)-(z_i,0) \in \Gamma
  \qquad 0\leq i\leq s.
\]
Thus, $\Gamma = \Z^s \oplus \Z^{s+1}$, so $P$ is vertex-spanning. As $P^\times$ is also centrally thin, $P^\times$ is also vertex-spanning.

\end{proof}
\section{From Gorenstein joins to free joins}\label{sec:free-joins}

\subsection{Associativity and splitting of degree joins}
\label{subsec:splitting}

Let us prove the statements on degree joins mentioned above. Let us first make a general observation. Let $P$ be a join of faces $F_0, \ldots, F_l$, equivalently, 
\begin{equation}C_P=C_{F_0}\oplus_{\mathbb R}\cdots\oplus_{\mathbb R}C_{F_l}.
\label{1}
\end{equation}In particular, this implies
\begin{equation}
\codeg P\leq\sum_{i=0}^l\codeg F_i,
\end{equation} and thus,
\begin{equation}
\deg P\geq\sum_{i=0}^l\deg F_i.
\label{2}
\end{equation}

\begin{proof}[Proof of Proposition~\ref{associative}]

Suppose (i) holds. Ordinary joins may be regrouped, so $G=\conv(F_1,\ldots,F_l)$  is a join of $F_1,\ldots,F_l$, while \(P\) is a join of \(F_0\) and \(G\). Applying \eqref{2} twice yields
\[
\deg P
\geq \deg F_0+\deg G
\geq \deg F_0+\sum_{i=1}^l\deg F_i.
\] Hence, overall equality implies (ii). Conversely, associativity of joins implies that $P$ is a join of  $F_0,\ldots,F_l$ and our assumptions imply that both inequalities in \eqref{2} are equalities, so we get (i).

Finally, suppose that \(P\) is Gorenstein. For $i=0, \ldots,l$ choose any lattice point $m_i\in\relint(C_{F_i})$ on height $\codeg F_i$. Then by our assumption
\[
m:=m_0+\cdots+m_l\in\relint(C_P)
\]has height \(\codeg P\). Since \(P\) is Gorenstein, \(m=m_{{C_P}^\vee}\).

Fix \(i \in \{0, \ldots, l\}\), and let \(D\) be a facet of \(C_{F_i}\). Then
\[
D\oplus_{\mathbb R}\bigoplus_{j\neq i}C_{F_j}
\]is a facet of \(C_P\). Let \(u\) be the corresponding vertex of $P^\times$. Hence, $u':=u|_{C_{F_i}}$ defines an inner normal of $D$. By definition we have
\begin{equation}
\langle m,u\rangle=1.
\label{3}
\end{equation}The functional \(u\) vanishes on every \(C_{F_j}\) with \(j\neq i\). So, we get $1=\langle m,u\rangle=\langle m_i,u' \rangle$. Therefore, $u'$ is even a primitive integral functional. Therefore, $m_i$ has lattice distance one from the facet $D$. As this is true for every facet of $C_{F_i}$, this shows that $(\codeg F_i) F_i$ is reflexive, respectively, $F_i$ is Gorenstein.
\end{proof}

\begin{proof}[Proof of Lemma~\ref{splitting}]
By the previous result, $P$ is a degree join of $F_0$ and $G$, where $F_0$ and $G$ are Gorenstein. 
Let us abbreviate $\sigma := C_P$, $\sigma_0 := C_{F_0}$, and $\sigma_G := C_G$ . We also define $\overline M_0$ as the restriction of $\overline M$ to $\lin \sigma_0$, and $\overline M_G$ by restricting to $\lin \sigma_G$.

Let $u\in\overline N$ be a primitive ray generator of $\sigma^{\vee}$ that evaluates $0$ on $\sigma_G$. As in the previous proof restricting $u$ as an integral functional $u'$ on $\overline M_0$ gives an inner facet normal of $\sigma_0$ that is primitive in the lattice $\overline N_0$ dual to $\overline M_0$. Hence, $u'$ is a vertex of $F_0^\times$. As any vertex of $F_0^\times$ occurs in this way and as $F_0^\times$ is vertex-spanning (with respect to $\overline N_0$), this implies that restricting to $\lin \sigma_0$ yields an isomorphism
\begin{equation}
    \overline N \cap \lin(\sigma_G)^\perp \cong \overline N_0,
    \label{perpy}
\end{equation}
which we write as $u \mapsto u'$. From this the desired splitting $\overline M_0 \oplus \overline M_G = \overline M$ follows directly. Let us write this up explicitly. Let $x\in\overline M$. There is a unique decomposition
\[
  x=x_0+x_G,
  \qquad
  x_0\in \lin \sigma_0,
  \quad
  x_G \in \lin \sigma_G
\]
Let $u'\in\overline N_0$. From \eqref{perpy} we get
\[
  \langle u',x_0\rangle=\langle u,x\rangle\in\Z.
\]
Hence, $x_0\in \overline M_0$, and thus also $x_G \in \overline M$, so in $\overline M_G$.
\end{proof}

\subsection{Proof of the main theorem}

We can now put everything together.

\begin{proof}[Proof of Theorem~\ref{main}]

Let $P$ be a Gorenstein polytope of dimension $d$ and with negative CY-dimension $q \ge 1$. We recursively decompose $P$ using Gorenstein joins. If $P$ is irreducible or a lattice point, then Corollary~\ref{irreducible} implies that $P$ is also centrally thin, so we stop. Note that this case can only happen for $q=1$. If $P$ is reducible, then write $P$ as a Gorenstein join of two faces (each necessarily also Gorenstein). Do the same repeatedly for each factor. As the dimensions drop, this process will terminate. Note as Gorenstein joins are degree joins and the degree join property is associative by Proposition~\ref{associative}, this leads to $P$ being a degree join of Gorenstein faces $P_1, \ldots, P_k$ with $k \ge 2$ where each $P_i$ is irreducible or a lattice point. Write $q_i$ for the negative CY-dimension of $P_i$ for $i=1, \ldots, k$. By Corollary~\ref{irreducible} (or directly for lattice points) we have $q_i \le 1$ for each $i=1, \ldots, k$. Therefore, as $q=q_1+ \cdots+q_k \ge 1$, there must exist at least one $q_i =1$. W.l.o.g. let $q_1=1$, so $P_1$ is centrally thin. By Proposition~\ref{duality} and Lemma~\ref{splitting}, $P$ is a free join with faces $P_1$ and $P' := \conv(P_2, \ldots, P_k)$, where $P'$ is Gorenstein with negative CY-dimension $q' = q-1$. If $q' \ge 1$, repeat the argument for $P'$, otherwise, we have $q'=0$ and stop. 

For the last statement in the theorem note that free joins of Gorenstein polytopes are Gorenstein (so in case (ii) one can combine the last two factors).
\end{proof}

\bibliographystyle{amsplain}
\bibliography{references}

@incollection{BN08,
  author    = {Batyrev, Victor V. and Nill, Benjamin},
  title     = {Combinatorial Aspects of Mirror Symmetry},
  booktitle = {Integer Points in Polyhedra---Geometry, Number Theory, Representation Theory, Algebra, Optimization, Statistics},
  series    = {Contemp. Math.},
  volume    = {452},
  pages     = {35--66},
  publisher = {Amer. Math. Soc.},
  address   = {Providence, RI},
  year      = {2008}
  }

@article{HNP09,
  author  = {Haase, Christian and Nill, Benjamin and Payne, Sam},
  title   = {Cayley Decompositions of Lattice Polytopes and Upper Bounds for {$h^*$}-Polynomials},
  journal = {J. Reine Angew. Math.},
  volume  = {637},
  pages   = {207--216},
  year    = {2009},
  doi     = {10.1515/CRELLE.2009.096}
}

@mastersthesis{Knu25,
  author = {Knupfer, Johannes},
  title  = {Der {G}leichheitsfall in der {Batyrev--Juny}-{V}ermutung \"uber {Gorenstein}-{P}olytope},
  school = {Otto-von-Guericke-Universit\"at Magdeburg},
  year   = {2025},
  type   = {Master's thesis}
}

@article{Nil24,
  author  = {Nill, Benjamin},
  title   = {Proof of a Conjecture of {Batyrev} and {Juny} on {Gorenstein} Polytopes},
  journal = {Discrete Comput. Geom.},
  volume  = {72},
  number  = {4},
  pages   = {1519--1529},
  year    = {2024},
  doi     = {10.1007/s00454-023-00575-0}
}

@article{NS13,
  author  = {Nill, Benjamin and Schepers, Jan},
  title   = {{Gorenstein} Polytopes and Their Stringy {$E$}-Functions},
  journal = {Math. Ann.},
  volume  = {355},
  number  = {2},
  pages   = {457--480},
  year    = {2013},
  doi     = {10.1007/s00208-012-0792-2}
}

@misc{preserving,
 author = {Vadym Kurylenko and Benjamin Nill},
 title = {Preserving {Hodge} {Vectors} of {Lattice} {Polytopes}},
 year = {2026},
 howpublished = {Preprint, {arXiv}:2602.20765 [math.{CO}] (2026)},
 url = {https://arxiv.org/abs/2602.20765},
 arXiv = {arXiv:2602.20765}
}

@article{borger_thin_2023,
	title = {Thin {Polytopes}: {Lattice} {Polytopes} {With} {Vanishing} {Local} \textit{h} *-{Polynomial}},
	issn = {1073-7928, 1687-0247},
	shorttitle = {Thin {Polytopes}},
	url = {https://academic.oup.com/imrn/advance-article/doi/10.1093/imrn/rnad231/7286974},
	doi = {10.1093/imrn/rnad231},
	language = {en},
	urldate = {2024-02-25},
	journal = {International Mathematics Research Notices},
	author = {Borger, Christopher and Kretschmer, Andreas and Nill, Benjamin},
	month = oct,
	year = {2023},
	pages = {rnad231},
}

@article{batyrev_multiples_2007,
	title = {Multiples of lattice polytopes without interior lattice points},
	volume = {7},
	issn = {1609-3321,1609-4514},
	url = {https://doi.org/10.17323/1609-4514-2007-7-2-195-207},
	doi = {10.17323/1609-4514-2007-7-2-195-207},
	number = {2},
	journal = {Moscow Mathematical Journal},
	author = {Batyrev, Victor and Nill, Benjamin},
	year = {2007},
	mrnumber = {2337878},
	pages = {195--207, 349},
}

@article{batyrev_mirror_1996,
	title = {Mirror duality and string-theoretic {Hodge} numbers},
	volume = {126},
	issn = {0020-9910,1432-1297},
	url = {https://doi.org/10.1007/s002220050093},
	doi = {10.1007/s002220050093},
	number = {1},
	journal = {Inventiones Mathematicae},
	author = {Batyrev, Victor V. and Borisov, Lev A.},
	year = {1996},
	pages = {183--203}
}

@article{borisov_string_2003,
	title = {String cohomology of {Calabi}–{Yau} hypersurfaces via mirror symmetry},
	volume = {180},
	issn = {0001-8708},
	url = {https://www.sciencedirect.com/science/article/pii/S0001870803000070},
	doi = {10.1016/S0001-8708(03)00007-0},
	number = {1},
	urldate = {2023-09-25},
	journal = {Advances in Mathematics},
	author = {Borisov, Lev A. and Mavlyutov, Anvar R.},
	month = dec,
	year = {2003},
	pages = {355--390},
}

@book{GKZ,
	series = {Mathematics: {Theory} \& {Applications}},
	title = {Discriminants, resultants, and multidimensional determinants},
	isbn = {0-8176-3660-9},
	url = {https://doi.org/10.1007/978-0-8176-4771-1},
	publisher = {Birkhäuser Boston, Inc., Boston, MA},
	author = {Gelfand, I. M. and Kapranov, M. M. and Zelevinsky, A. V.},
	year = {1994},
	doi = {10.1007/978-0-8176-4771-1},
}

@article {BJ10,
    AUTHOR = {Batyrev, Victor and Juny, Dorothee},
     TITLE = {Classification of {G}orenstein toric del {P}ezzo varieties in
              arbitrary dimension},
   JOURNAL = {Mosc. Math. J.},
  FJOURNAL = {Moscow Mathematical Journal},
    VOLUME = {10},
      YEAR = {2010},
    NUMBER = {2},
     PAGES = {285--316, 478},
      ISSN = {1609-3321},
   MRCLASS = {14M25 (14J45 52B20)},
  MRNUMBER = {2722799},
MRREVIEWER = {Jaros\l aw A. Wi\'{s}niewski},
       DOI = {10.17323/1609-4514-2010-10-2-285-316},
       URL = {https://doi.org/10.17323/1609-4514-2010-10-2-285-316},
}

@book {Sch93,
    AUTHOR = {Schneider, Rolf},
     TITLE = {Convex bodies: the {B}runn-{M}inkowski theory},
    SERIES = {Encyclopedia of Mathematics and its Applications},
    VOLUME = {44},
 PUBLISHER = {Cambridge University Press},
   ADDRESS = {Cambridge},
      YEAR = {1993},
     PAGES = {xiv+490}
}

@article {Nil20,
    AUTHOR = {Nill, Benjamin},
     TITLE = {The mixed degree of families of lattice polytopes},
   JOURNAL = {Ann. Comb.},
  FJOURNAL = {Annals of Combinatorics},
    VOLUME = {24},
      YEAR = {2020},
    NUMBER = {1},
     PAGES = {203--216},
      ISSN = {0218-0006},
   MRCLASS = {52B20 (11H06)},
  MRNUMBER = {4078146},
MRREVIEWER = {G\'{a}bor Hetyei},
       DOI = {10.1007/s00026-019-00490-3},
       URL = {https://doi.org/10.1007/s00026-019-00490-3},
}

@preamble{
   "\def\cprime{$'$} "
}

@Book{F,
  author = 	 {Fulton, W.},
  title = 	 {Introduction to Intersection Theory in Algebraic Geometry},
  publisher = 	 {Amer. Math. Soc.},
  year = 	 {1984},
  volume = 	 {54},
  OPTnumber = 	 {},
  series = 	 {BMS Regional Conf. Ser. in Math.},
  address = 	 {Providence}
}

@incollection{batyrevborisov,
 author = {Batyrev, Victor V. and Borisov, Lev A.},
 title = {On {Calabi}-{Yau} complete intersections in toric varieties},
 booktitle = {Higher dimensional complex varieties. Proceedings of the international conference, Trento, Italy, June 15--24, 1994},
 isbn = {3-11-014503-0},
 pages = {39--65},
 year = {1996},
 publisher = {Berlin: Walter de Gruyter},
 language = {English},
 zbMATH = {1054910},
 Zbl = {0908.14015}
}

@article{batyrev94,
 author = {Batyrev, Victor V.},
 title = {Dual polyhedra and mirror symmetry for {Calabi}-{Yau} hypersurfaces in toric varieties},
 fjournal = {Journal of Algebraic Geometry},
 journal = {J. Algebr. Geom.},
 issn = {1056-3911},
 volume = {3},
 number = {3},
 pages = {493--535},
 year = {1994},
 language = {English},
 zbMATH = {653320},
 Zbl = {0829.14023}
}

\end{document}